\newtheorem{thm}{Theorem}
\newtheorem{lem}{Lemma}[section]
\newtheorem{prop}{Proposition}[section]
\newtheorem{prob}[thm]{Question}
\theoremstyle{remark}
\theoremstyle{definition}
\newtheorem{defi}{Definition}[section]
\numberwithin{equation}{section}
\def\p{\partial}
\def\R{\mathbb{R}}
\def\C{\mathbb{C}}
\def\Z{\mathbb{Z}}
\def\l{\lambda}
\def\i{\sqrt{-1}}
\def\o{\omega}
\def\g{{\mathfrak g}}
\def\cD{\mathcal D}
\def\cF{{\mathcal F}}
\def \Ker {\text{Ker}}
\begin{document}

\title{The generalized Frankel conjecture in Sasaki geometry}

\author{Weiyong He and Song Sun}

\maketitle

\begin{abstract}
We prove some structure results for \emph{transverse reducible} Sasaki manifolds. In particular, we show Sasaki manifolds with positive Ricci curvature is transversely irreducible, and so there is no join (product) construction for irregular Sasaki-Einstein manifolds, as opposed to the quasi-regular case done by Wang-Ziller and Boyer-Galicki.  As an application, we classify compact Sasaki manifolds with non-negative transverse bisectional curvature, which can be viewed as the generalized Frankel conjecture (N. Mok's theorem) in Sasaki geometry.
\end{abstract}

\section{Introduction} The de Rham decomposition theorem asserts that a simply connected complete Riemannian manifold with reducible holonomy group must split as the product of two Riemannian manifolds; hence the building blocks of Riemannian manifolds are irreducible ones.
 In K\"ahler geometry, this decomposition theorem is compatible with the K\"ahler structure, so the building blocks are then irreducible K\"ahler manifolds. While the procedure of taking products of two K\"ahler manifolds is quite straightforward, it becomes much more interesting when one allows orbifold singularities and look at certain $S^1$ bundle as a Sasaki manifold. By \cite{WZ}, \cite{BG1} this construction (called ``join" construction in \cite{BG1}) shows the diversity of quasi-regular Sasaki-Einstein manifolds.  Motivated by the Ads/CFT correspondence in theoretical physics, the first example of an irregular Sasaki-Einstein manifold was constructed in \cite{GMSW}.  However, we will show that join construction cannot be applied to irregular Sasaki-Einstein manifolds. This phenomenon  follows from a general structure theorem for irregular Sasaki structures. Roughly speaking, we show (see Theorem \ref{thm-4-1}) that a compact irregular Sasaki manifold with certain curvature assumption is irreducible  (for the definition of irreducibility in Sasaki setting, we refer to Section \ref{S2}).

As an application, we can extend the classification results in \cite{He-Sun} to the borderline case. In \cite{He-Sun} we proved  that a compact simply connected Sasaki manifold with positive transverse bisectional curvature is a \emph{simple deformation} of the standard Sasaki structure on $S^{2n+1}$. Allowing the transverse bisectional curvature to be non-negative, we obtain

\begin{thm}\label{T-general}
Let $(M, \xi, \eta, g)$ be a compact Sasaki manifold with non-negative transverse bisectional curvature of dimension $2n+1$, then one of the following is true,

\begin{enumerate}
\item $M$ is  irregular. Then $\pi_1(M)$ is  finite and the universal cover of $M$ is isomorphic to a weighted Sasaki sphere $(S^{2n+1}, \xi_0, \eta_0, g_0)$ with nonnegative transverse bisectional curvature.

\item $M$ is quasi-regular. Then the universal cover of the quotient orbifold $M/\cF_\xi$ is isomorphic to
\[(WP_1, \omega_1)\times \cdots \times (WP_k, \omega_k)\times (O_1, h_1)\times \cdots \times (O_l, h_l)\times (\mathbb{C}^i, h_0)\]
where $\omega_j$ is a K\"ahler metric on the weighted projective space $WP_j$ with nonnegative bisectional curvature,  $O_1$, $\cdots$, $O_l$ are compact irreducible Hermitian symmetric spaces of rank $\geq 2$ endowed with the canonical metric and $h_0$ is the flat metric on $\C^i$.
\end{enumerate}
\end{thm}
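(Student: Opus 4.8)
The plan is to transplant N.~Mok's resolution of the generalized Frankel conjecture to the transverse Kähler structure of $(M,\xi,\eta,g)$, using the structure theory for transverse reducible Sasaki manifolds proven earlier (in particular Theorem~\ref{thm-4-1}) to organize the cases, and \cite{He-Sun} to close off the positively curved case. Concretely, I would run the transverse Kähler--Ricci flow $\partial_t\omega^T=-\rho^T+\omega^T$ on $M$: as in \cite{He-Sun} this flow exists for all time, keeps the Reeb field and the transverse holomorphic structure fixed while staying in a fixed basic Kähler class, and (by the transverse Hamilton--Mok maximum principle) preserves non-negative transverse bisectional curvature. Applying the strong maximum principle of Brendle--Schoen/Bando/Mok type to the transverse curvature operator, the null distribution of the transverse bisectional curvature is parallel for the transverse Levi-Civita connection and invariant under the flow; this yields a clean dichotomy: either for $t>0$ the transverse bisectional curvature becomes strictly positive, or the transverse holonomy is reducible — that is, $(M,\xi,\eta,g)$ is transverse reducible — or the transverse metric is already (up to scale) the canonical metric of an irreducible Hermitian symmetric space of rank $\ge 2$ (possibly after splitting off a flat de Rham factor, which in the Sasaki setting is ruled out or absorbed by the structure theorems of Section~\ref{S2}).

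\emph{Transverse reducible case.} By Theorem~\ref{thm-4-1} an irregular compact Sasaki manifold with positive transverse Ricci curvature is transversely irreducible, and after the flow improves the transverse Ricci curvature (which it does unless we are in the symmetric or reducible situation) this applies; hence in the reducible case $M$ cannot be irregular and is therefore quasi-regular. Then $\cZ=M/\cF_\xi$ is a compact Kähler orbifold with non-negative bisectional curvature, and I would invoke the orbifold version of Mok's theorem — obtained by running the same parabolic argument on the orbifold together with the orbifold Frankel conjecture for the positively curved pieces — to conclude that the orbifold universal cover of $\cZ$ is the product $(WP_1,\omega_1)\times\cdots\times(WP_k,\omega_k)\times(O_1,h_1)\times\cdots\times(O_l,h_l)\times(\C^i,h_0)$ of the statement, the $WP_j$ being exactly the factors whose bisectional curvature the flow can make positive (orbifold uniformizations of $\mathbb{P}^{n_j}$) and the $O_i$ the rigid symmetric factors. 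This is conclusion (2).

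\emph{Transverse irreducible case.} If the flow produces strictly positive transverse bisectional curvature on the deformed Sasaki structure $(M,\xi,\eta_t,g_t)$, then by \cite{He-Sun} its universal cover is a weighted Sasaki sphere $(S^{2n+1},\xi_0,\eta_0,g_0)$; since the transverse Ricci curvature is then positive, a Myers-type theorem for the transverse structure (as in \cite{He-Sun}) forces $\pi_1(M)$ to be finite, and the original structure on $\widetilde M=S^{2n+1}$ is a weighted Sasaki structure with non-negative transverse bisectional curvature. If $M$ is irregular this is conclusion (1); if $M$ is quasi-regular it is the one-factor instance $k=1,\ l=i=0$ of (2) with $WP_1=\cZ$. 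If instead the flow never produces positive transverse bisectional curvature, then by the strong maximum principle and transverse irreducibility Mok's rigidity pins the transverse metric down to the canonical metric of a single compact irreducible Hermitian symmetric space $O_1$ of rank $\ge 2$; since $[\tfrac12 d\eta]_B$ then lies on the line $H^2_B\cong H^2(O_1;\R)\cong\R$ and the metric is rigid, the Reeb foliation is the quasi-regular $S^1$-orbibundle associated to the symmetric Kähler class, so $M$ is quasi-regular with $\cZ=O_1$, i.e.\ conclusion (2) with $l=1,\ k=i=0$. In particular an irregular $M$ must land in the first alternative, which is consistent with — and forced by — Theorem~\ref{thm-4-1}.

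The main obstacle I expect is twofold. First, carrying the Hamilton--Mok and strong maximum principle machinery over to the transverse (basic/foliated) category and, in the quasi-regular case, to Kähler orbifolds: one must verify that the evolving metrics stay genuinely basic, that the null distribution of the transverse bisectional curvature integrates to a transverse de~Rham type splitting compatible with the Reeb flow, and that the orbifold Frankel conjecture and Mok rigidity hold with the relevant orbifold fundamental groups. Second, and the more delicate point, is the regularity dichotomy in the transverse irreducible case: showing that a transverse irreducible structure of non-negative transverse bisectional curvature that is \emph{not} deformable to positive bisectional curvature (a rank $\ge 2$ symmetric factor) cannot occur for an \emph{irregular} $M$. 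This is precisely where the rigidity of the Hermitian symmetric metric — which rationalizes the basic Kähler class over the generator of $H^2$ — must be played against the behaviour of the Reeb field, i.e.\ the phenomenon quantified by Theorem~\ref{thm-4-1}.
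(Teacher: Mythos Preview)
Your overall architecture---run a transverse parabolic flow, use a strong maximum principle to split into positive/symmetric/reducible alternatives, and invoke Theorem~\ref{thm-4-1} to force quasi-regularity in the reducible case---matches the paper's strategy. But the execution differs at two points where your sketch is genuinely incomplete.

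First, in the reducible case you reach the hypothesis of Theorem~\ref{thm-4-1} by saying ``the flow improves the transverse Ricci curvature''. This is not how the paper proceeds, and it is not clear your route works: what Theorem~\ref{thm-4-1} needs is that \emph{each} invariant factor has either strictly positive Ricci or is flat, and the Ricci flow does not obviously deliver this factor-by-factor dichotomy from non-negative bisectional curvature alone. The paper instead uses Lemma~\ref{L-maximalsplitting} to write $c_1^B=\sum_i c_i[\omega_i]_B$ with $c_i\ge 0$, and then the transverse Calabi--Yau theorem (Lemma~\ref{Calabi-Yau}) to produce a transverse K\"ahler deformation with $\widetilde{Ric^T}=\sum_i c_i\omega_i$, so each factor is literally Einstein with constant $c_i\ge 0$. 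This is what makes Theorem~\ref{thm-4-1} applicable. After that the paper stays in the Sasaki category and splits $\widetilde M$ as an iterated join, reducing to the $b_2^B=1$ case (Theorem~\ref{T-special}); it does not descend to the orbifold and invoke an ``orbifold Mok'' as you propose---indeed, Theorem~\ref{thm-4-2} \emph{is} the orbifold Mok, proved here rather than quoted.

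Second, in the irreducible symmetric alternative your argument that $M$ must be quasi-regular (``$H^2_B\cong\R$ and rigidity rationalizes the basic class'') is not a proof. The paper handles this by Takahashi's theorem (Theorem~\ref{T-symmetric}): a compact simply-connected transversely symmetric Sasaki manifold is homogeneous, hence a principal $S^1$-bundle over a Hermitian symmetric space, and in particular regular. Your cohomological heuristic does not by itself exclude an irrational Reeb vector on such a space; one needs the homogeneity.

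In short: the skeleton is right and close to the paper's, but you should replace the ``flow improves Ricci'' step by the transverse Calabi--Yau deformation of Lemma~\ref{Calabi-Yau}, replace the appeal to an off-the-shelf orbifold Mok by the join-splitting reduction to Theorem~\ref{T-special}, and use Takahashi's theorem for the symmetric case.
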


There are several technical ingredients in the proof the above theorem. 
When $M$ is quasi-regular, this is  actually an orbifold version of  classification of compact K\"ahler  manifolds with nonnegative bisectional curvature, known as the generalized Frankel conjecture. The generalized Frankel conjecture can be reduced to the special case that the manifold is assumed to be simply connected and with second Betti number one, using  the structure theorem of Howard-Smyth-Wu \cite{HSW} for compact K\"ahler manifolds with nonnegative bisectional curvature; and it  was proved by Bando \cite{Bando} in complex dimension three, and later by  Mok \cite{Mok} in general. While if $M$ is assumed to be quasi-regular, 
 similar structure theorem as in \cite{HSW} for Sasaki setting can also be proved with appropriate modifications.  

Recently Brendle-Schoen \cite{BS1} proved the diffeomorphism sphere theorem when the sectional curvature of a Riemannian manifold $(M, g)$ is $1/4$ pinched; they also classified the weakly $1/4$ pinched manifolds \cite{BS2}. Their strategy to deal with the later case  has been adapted recently by H-L. Gu \cite{Gu} to  provide an alternative proof  of Mok's theorem. 
An appropriate modification of Gu's argument \cite{Gu} together with our previous results in \cite{He-Sun} implies the following special case of Theorem \ref{T-general}. 

\begin{thm}\label{T-special}Let $(M, \xi, \eta, g)$ be a compact simply connected Sasaki manifold with non-negative transverse bisectional curvature such that $b_2^B=b_2(M)+1=1$, then  either $M$ is a weighted Sasaki sphere with a simple metric, or $M$ is regular, and the quotient manifold is isometrically biholomorphic to  an compact irreducible Hermitian symmetric spaces of rank $\geq 2$ endowed with the canonical metric.
\end{thm}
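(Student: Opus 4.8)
The plan is to run the normalized Sasaki--Ricci flow $\partial_t g^T=-(\mathrm{Ric}^T-g^T)$ on the transverse K\"ahler metric of $M$ and to adapt H.-L. Gu's Ricci-flow proof of Mok's theorem \cite{Gu} --- which imports the strong-maximum-principle technique of Brendle--Schoen \cite{BS2} --- to the transverse setting, using the classification of \cite{He-Sun} to handle the strictly positive case. For the flow to be available the transverse structure must be \emph{transverse Fano}, and this is where the hypothesis $b_2^B=1$ enters. Non-negative transverse bisectional curvature forces non-negative transverse Ricci curvature, so $c_1^B$ is represented by a non-negative basic $(1,1)$-form; as $H^2_B(M;\R)$ is one dimensional this gives $c_1^B=\lambda\,[d\eta]_B$ with $\lambda\ge 0$. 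The case $\lambda=0$ is ruled out: integrating the transverse Ricci form against $(\omega^T)^{n-1}$ shows the transverse scalar curvature is identically zero, whence by non-negativity of the transverse bisectional curvature (and the K\"ahler identities) the transverse metric is flat, and a compact simply connected manifold admits no transversely flat Sasaki structure. Hence $c_1^B>0$, the flow exists on $[0,\infty)$, and it keeps $\xi$ and the transverse holomorphic structure fixed throughout.

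Next one establishes the transverse Bando--Mok theorem, that non-negative transverse bisectional curvature is preserved along the Sasaki--Ricci flow. This follows from Hamilton's maximum principle for the reaction--diffusion equation satisfied by the transverse curvature operator, using that the cone of curvature operators with non-negative bisectional curvature is preserved by the associated ODE; since the relevant tensors and evolution equations are basic, only notational changes are needed compared with the K\"ahler case. The heart of the proof is the ensuing Brendle--Schoen--Gu dichotomy. If $g^T(t)$ has strictly positive transverse bisectional curvature for every $t>0$, then applying \cite{He-Sun} to a flowed metric --- and using that the flow has altered neither $\xi$ nor the transverse holomorphic structure --- shows $M$ is a weighted Sasaki sphere with a simple metric, the first alternative. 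Otherwise, Hamilton's strong maximum principle for the evolving transverse curvature operator shows that the transverse holonomy group is a proper subgroup of $U(n)$ for every $t>0$: the directions along which the transverse bisectional curvature vanishes span a proper, nonzero subbundle of the transverse tangent bundle that is parallel for the transverse Levi-Civita connection.

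It remains to run the holonomy argument transversely, at each fixed $t>0$. If the transverse holonomy is reducible one obtains a local transverse de Rham--K\"ahler splitting, which produces two independent classes in $H^2_B(M;\R)$ and hence $b_2^B\ge 2$, against the hypothesis. If it is irreducible then, by Berger's classification, $g^T(t)$ is locally symmetric unless its holonomy is $U(n)$, $SU(n)$ or $Sp(n/2)$; a proper subgroup rules out $U(n)$, and $c_1^B>0$ (so $\mathrm{Ric}^T\not\equiv 0$) rules out the Ricci-flat cases $SU(n)$ and $Sp(n/2)$. Thus $g^T(t)$ is locally Hermitian symmetric for every $t>0$, hence so is $g^T(0)$ by letting $t\to0^+$; it is necessarily of rank $\ge 2$ because $\C P^n$ has holonomy $U(n)$, and (up to scale) carries the canonical metric. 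Finally, since the transverse metric is locally isometric to the simply connected Hermitian symmetric space $O$ and $\pi_1(M)=1$, the foliation $\cF_\xi$ is transversely homogeneous modeled on $O$ and its developing map $M\to O$ is globally defined; it is a proper submersion, so by Ehresmann's theorem $M$ is the total space of a fibre bundle over $O$ with one-dimensional compact fibre. The fibres are the leaves of $\cF_\xi$, hence circles, so $M$ is a regular Sasaki manifold with quotient $O$, an irreducible Hermitian symmetric space of rank $\ge 2$ with its canonical metric. This is the second alternative.

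The step I expect to be the main obstacle is the dichotomy of the second paragraph: proving Hamilton's strong maximum principle for the transverse curvature operator along the Sasaki--Ricci flow in full generality --- including the quasi-regular and irregular cases, where the leaf space is at best an orbifold --- and, more delicately, upgrading the vanishing locus of the transverse bisectional curvature to a genuinely parallel, transversely holonomy-invariant subbundle. Transverse parallel transport and transverse holonomy have to be set up with care for a foliation whose generic leaf closures may be higher dimensional, and one must verify that the Bando--Mok invariant-cone computations respect the passage to basic representatives. A secondary, more standard issue is the exclusion of $\lambda=0$ (via the transverse K\"ahler identities and the structure of transversely flat foliations) and the developing-map argument for regularity in the symmetric case, both of which rest on basic Hodge theory for Sasaki manifolds.
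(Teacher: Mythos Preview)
Your overall architecture matches the paper's: run the Sasaki--Ricci flow, invoke Gu's adaptation of the Brendle--Schoen strong maximum principle, and use \cite{He-Sun} for the strictly positive case. The exclusion of $\lambda=0$ and the preservation of nonnegative transverse bisectional curvature are handled as you describe. However, two steps are mischaracterized or missing, and one differs from the paper in a way worth noting.

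First, your stated mechanism for the dichotomy is not the one that actually works. You write that if the flowed metric is not strictly positive then ``the directions along which the transverse bisectional curvature vanishes span a proper, nonzero subbundle \ldots\ that is parallel.'' That is Mok's picture, not Gu's, and it is not what the paper proves. The paper (following \cite{Gu} and \cite{BS2}) works on the unitary frame bundle $P$ of $\nu^{1,0}$: Brendle--Schoen's Proposition~5 shows the zero set $F=\{u=0\}$ of $u(\underline e,t)=R^\nu_{1\bar 1 2\bar 2}$ is invariant under horizontal curves, hence under the holonomy action. If the holonomy is the full $U(n)$ and $F\neq\emptyset$, one rotates a null pair $(e_1,e_2)$ by $\theta$ inside $U(n)$; using the null-vector relations forced by the evolution equation one computes $R^\nu(\tilde e_1,\bar{\tilde e}_1,\tilde e_2,\bar{\tilde e}_2)=\cos^2\theta\sin^2\theta\,(R^\nu_{1\bar 1 1\bar 1}+R^\nu_{2\bar 2 2\bar 2})$, which contradicts the \emph{positive transverse holomorphic sectional curvature} for $t>0$. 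You never establish this last positivity (the paper gets it, together with $Ric^T>0$, from $c_1^B>0$ and the structure of the flow), and without it the rotation argument has no bite. So the correct route from ``not strictly positive'' to ``proper holonomy'' goes through frame-bundle invariance plus the rotation contradiction, not through a parallel subbundle of null directions.

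Second, applying Berger's list to ``transverse holonomy'' is not automatic. The paper bridges this by invoking Bando's real-analyticity of Ricci flow (carried to the Sasaki setting): analyticity of $\nabla^T$ forces the local holonomy on each leaf-space chart $V_\alpha$ to coincide with the global transverse holonomy, so $b_2^B=1$ (global irreducibility) yields local irreducibility, and Berger then applies on $V_\alpha$. Your proposal omits this analyticity step entirely; without it the reduction ``irreducible transverse holonomy $\Rightarrow$ Berger list'' is a gap.

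Finally, for the locally symmetric alternative the paper does not run a developing-map/Ehresmann argument; it quotes Takahashi's theorem that a compact simply connected transversely symmetric Sasaki manifold is homogeneous, hence a principal $S^1$-bundle over a Hermitian symmetric space. Your developing-map route may well be made to work, but it is a genuinely different (and longer) path; the paper's shortcut via \cite{Taka} is cleaner. Also, the paper organizes the dichotomy the other way around: it \emph{assumes} $g^T$ is not locally transversely symmetric, propagates that by continuity for small $t$, and then proves positivity---avoiding your limit $t\to 0^+$.
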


The  new ingredient for Theorem \ref{T-general} is  then reduced to the structure theorem for Sasaki manifolds discussed above; namely,  an irregular Sasaki manifold satisfying the curvature assumption in Theorem \ref{T-general} must be  irreducible. 
Such a property seems to be topological in nature; in general we would like to ask that 
 
 \begin{prob}
Is every (simply-connected) compact irregular Sasaki  manifold  irreducible? 
\end{prob}

This article is organized as follows. In Section 2 we discuss some general theory in Sasaki setting and prove Theorem \ref{thm-4-1}. In Section 3 we prove Theorem \ref{T-special} and Theorem \ref{T-general}. 

\section{Splitting phenomenon in Sasaki geometry}\label{S2}

We first briefly recall notions in Sasaki geometry.
We always denote a Sasaki structure on $M$ by $(\xi, \eta, g)$,  with $\xi$ the Reeb vector field, $\eta$ the contact one form and $g$ the Sasaki metric. There is an orthogonal decomposition of the tangent bundle \[TM=L\xi\oplus \cD,\] where $L\xi$ is the trivial  bundle generalized by $\xi$, and $\cD=\Ker (\eta)$.
The metric $g$ and the contact form $\eta$ determine a $(1,1)$ tensor field $\Phi$ on $M$ by
\[
g(Y, Z)=\frac{1}{2} d\eta(Y, \Phi Z), Y, Z\in \Gamma(\cD).
\]
$\Phi$ restricts to an almost complex structure on $\cD$: \[\Phi^2=-\mathbb{I}+\eta\otimes \xi. \]
Since both $g$ and $\eta$ are invariant under $\xi$, there is a well-defined K\"ahler structure on the local leaf space of the Reeb foliation. This is called a \emph{transverse K\"ahler structure} $g^T$ and it is induced by $(\cD , \omega^T, \Phi|_{\cD})$, where the transverse K\"ahler form is given by
\[
\o^T=\frac{1}{2}d\eta.
\]
Hence the Sasaki structure is related to the transverse structure as follows,
\[
g=\eta\otimes \eta+g^T.
\]
Denote $\nu=\nu(\cF_\xi)$ to be the quotient bundle of the Reeb foliation generated by $\xi$, namely, $\nu(\cF_\xi)=TM/L_\xi$. The transverse K\"ahler metric $g^T$ induces a natural bundle isomorphism $f: \nu(\cF_{\xi})\rightarrow \cD$ which splits the exact sequence
\[
0\rightarrow L_\xi\rightarrow TM\rightarrow \nu(\cF_\xi)\rightarrow 0.
\] 
 The tensor field $\Phi$ naturally induces a splitting on $\cD\otimes \C=\cD^{1, 0}\oplus \cD^{0, 1}$; similarly we have $\nu\otimes \C=\nu^{1, 0}\oplus \nu^{0, 1}$. The map $f$  induces a bundle isomorphism $\nu^{1, 0}\rightarrow \cD^{1, 0}$.

We also recall transverse K\"ahler structure in local coordinates; see \cite{FOW} Section 3 for a nice reference.
Let $(M, \xi, \eta, g)$ be a Sasaki manifold.
Let $U_{\alpha}$ be an open covering of $M$, $\pi_{\alpha}: U_{\alpha}\rightarrow V_{\alpha}\subset \C^n$ the submersion corresponding to the Reeb foliation such that $\pi_{\alpha}\circ \pi_{\beta}^{-1}: \pi_{\beta}(U_{\alpha}\cap U_{\beta})\rightarrow \pi_{\alpha}(U_{\alpha}\cap U_{\beta})$ is bi-holomorphic when $U_{\alpha}\cap U_{\beta}$ is non-empty.  Let  $( z_1, \cdots, z_n)$ be a holomorphic coordinate in $V_{\alpha}$, and $(x, z_1, \cdots, z_n)$ the corresponding coordinate on $U_{\alpha}$ such that $\p_x=\xi$. The transverse K\"ahler metric $g^T$ induces a K\"ahler metric $g_{\alpha}^T$ on $V_{\alpha}$;
on $U_\alpha \cap U_\beta$,
\begin{equation}\label{E-cocycle}
\pi_\alpha\circ \pi^{-1}_\beta: \pi_\beta(U_\alpha\cap U_\beta)\rightarrow \pi_\alpha (U_\alpha\cap U_\beta)
\end{equation}
gives an isometry of K\"ahler manifolds $(V_\alpha, g^T_\alpha)$ and $(V_\beta, g^T_\beta)$. In particular, the collection $\{V_\alpha, g^T_\alpha\}$ does not give rise to a K\"ahler manifold, but it still satisfies the cocycle condition. 

\begin{defi}A Sasaki manifold $(M, \xi, \eta, g)$ is \emph{locally transverse reducible} at a point $p\in M$, if there exists a neighborhood $U_\alpha$ of $p$ and the submersion $\pi_\alpha: U_\alpha\rightarrow V_\alpha$, such that the transverse K\"ahler metric $\{V_\alpha, g^T_\alpha \}$ is reducible. We say $(M, \xi, \eta, g)$ is locally transverse reducible if it is locally transverse reducible at any point; and it is locally transverse irreducible if it is not locally transversely reducible.
\end{defi}

From now on, we will simply abbreviate  ``locally transverse reducible (irreducible)" by ``\emph{locally reducible (irreducible)}" for a Sasaki manifold $(M, \xi, \eta, g)$.\\

Denote $\cD=\text{Ker}(\eta)$ to be the contact subbundle. The  \emph{transverse Levi-Civita connection} is defined as, for $Y\in\Gamma(\cD)$,
\[
\nabla^T_XY=\left\{\begin{array}{ll} &(\nabla_XY)^p,\; \mbox{if} \;X\in \cD, \\
&[\xi, Y]^p, \; \mbox{if}\;X=\xi,
\end{array}\right.
\]
where $X^p$ means the projection of $X$ on $\cD$.

\begin{defi}A vector field $Y\in \cD$ is \emph{transversely parallel} if $\nabla^T _XY=0$ for any $X$.  A subbundle $\cD_1$ of $\cD$ is  \emph{invariant} if  for any $Y\in \cD_1$, $\nabla^T_XY\in \cD_1$. The contact subbundle $\cD$ is said to be reducible (with respect to $g^T$) if there are invariant subbunldes $\cD_1, \cD_2$ with an orthogonal decomposition
 $\cD=\cD_1\oplus \cD_2$. We  call $g^T$  (irreducible) reducible if $\cD$ is (not) reducible with respect to $g^T$, and correspondingly we call $(M, \xi, \eta, g)$ \emph{transverse (irreducible) reducible}. Again we will omit the word ``transverse" from now on. 
 \end{defi}
Note that for any invariant subbundle $\cD_1$ of $\cD$, we can also define its reducibility in the above sense. Suppose $\cD$ is reducible, namely $\cD=\cD_1\oplus\cdots \cD_k$.  We say the splitting of $\cD$ is \emph{maximal} if $\cD_i$ is irreducible for each $1\leq i\leq k$.

Clearly reducibility implies locally reducibility, but the converse is in general not true. This is related with the relation between local and global holonomy. We shall return to this in Section 3.

Suppose $(M, \xi, \eta, g)$ is a compact Sasaki manifold which is transverse reducible and 
 suppose there is a maximum splitting  $\cD=\cD_1\oplus \cdots \oplus \cD_r$ with $r\geq 2$ such that $\omega^T=\omega_1\oplus \cdots \oplus \omega_r$. The main result in this section is the following

\begin{thm}\label{thm-4-1}
If each component $\omega_i$ either has positive Ricci curvature (restricted on $\cD_i$) or is flat, then $(M, \xi, \eta, g)$ is quasi-regular.
\end{thm}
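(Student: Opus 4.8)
The plan is to prove the equivalent statement that every leaf of the Reeb foliation $\cF_\xi$ is a circle, by developing the reducible transverse K\"ahler structure into a genuine product and exploiting compactness. Since $M$ is compact, $\cF_\xi$ is a Riemannian foliation for the bundle-like metric $g$, so I would first pass to the associated transverse geometry in the sense of Molino: after a suitable cover $\widehat M\to M$ one obtains a complete transverse K\"ahler manifold together with a trivialized transverse holonomy. Because the subbundles $\cD_i$ are $\nabla^T$-parallel, they give globally defined parallel subbundles of the transverse tangent bundle upstairs, so the de Rham decomposition theorem applies to the simply connected transverse model and splits it isometrically as a product $\widehat V_1\times\cdots\times\widehat V_r$ of complete simply connected K\"ahler manifolds, with $\widehat V_i$ carrying the metric of $(\cD_i,\omega_i)$.

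Next I would sort the factors by their curvature. If $\omega_i$ has positive transverse Ricci curvature on $\cD_i$ then, by compactness of $M$, its transverse Ricci curvature is bounded below by a positive constant, so $\widehat V_i$ is complete with $\mathrm{Ric}\ge c_i>0$ and hence compact by Bonnet--Myers (and simply connected, being a de Rham factor). If $\omega_i$ is flat then $\widehat V_i\cong\C^{m_i}$ with its flat metric. Grouping the factors, the transverse model is $V_+\times\C^{m_0}$ where $V_+$ is a compact K\"ahler manifold and $m_0\ge 0$.

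It remains to see that the leaves are circles. The developed picture exhibits $\widehat M$ as a locally trivial fibration over $V_+\times\C^{m_0}$ whose fibre $F$ is a connected leaf, so $F\cong\R$ or $F\cong S^1$, and $M$ is quasi-regular exactly when $F\cong S^1$. Suppose $F\cong\R$. Then the fibration is a trivial $\R$-bundle, $\widehat M\cong\mathbb{R}_t\times(V_+\times\C^{m_0})$, and since $\xi=\partial_t$, $\widehat\eta(\xi)=1$ and $\cL_\xi\widehat\eta=0$ we may write $\widehat\eta=dt+\pi^*\beta$; then $2\,\pi^*\widehat\omega^T=d\widehat\eta=\pi^*d\beta$ forces the transverse K\"ahler form of $V_+\times\C^{m_0}$ to be exact, hence its restriction to a slice $V_+\times\{0\}$, namely the K\"ahler form of the compact K\"ahler manifold $V_+$, is exact --- impossible unless $V_+$ is a point. (If $V_+$ is a point the transverse structure is everywhere flat; then $\widehat M$ is the Heisenberg group with its standard Sasaki structure, and one concludes separately, since a cocompact lattice meets the centre in a lattice, that $F\cong S^1$.) Therefore $F\cong S^1$ and $M$ is quasi-regular.

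The main obstacle is the first step. A Riemannian foliation need not be developable and its space of leaves need not be Hausdorff, so one cannot literally fibre $M$ over a product manifold; the rigorous version replaces $M$ by a cover on which, by Molino's structure theory, the closure of $\cF_\xi$ is governed by a torus action, decomposes the resulting transverse manifold with its complete transverse K\"ahler metric, and checks that the parallel splitting $\cD=\bigoplus_i\cD_i$ descends compatibly. This is precisely the local-versus-global holonomy point flagged in the introduction, and it is here that the Bonnet--Myers input is essential: it is what forces the non-flat de Rham factors to globalize to compact orbifolds. The hypothesis ``flat'' (rather than merely Ricci-flat) is used to ensure that the non-compact de Rham factors are Euclidean $\C^{m_i}$, on which the transverse K\"ahler form is exact --- exactly what the final contradiction requires.
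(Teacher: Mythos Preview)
Your proposal takes a genuinely different route from the paper, and the difference is exactly where the gap lies.

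The paper never tries to realize the transverse geometry as a global manifold. Instead it works inside $M$ with the totally geodesic leaves of the distributions $\cD_i\oplus\langle\xi\rangle$ (Lemma~\ref{lem-4-1}). When at least two factors have positive transverse Ricci curvature, a Reeb orbit is the transversal intersection of two compact such leaves (compact by Myers applied to the induced Sasaki metric), hence is a circle (Lemma~\ref{lem-4-2}). In the remaining case of one positive factor $\cD_1$ and a flat complement $\cD'$, the paper takes a compact leaf $S$ of $\cD_1\oplus\langle\xi\rangle$, uses Tanno's description of the transversely flat leaves as quotients of the standard Sasaki space $E^{2k+1}$, and constructs by hand a local isometry $\widetilde S\times_{\R}E^{2k+1}\to M$ via the exponential map along $\cD'$. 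Quasi-regularity then comes from analyzing the deck transformations together with Lemma~\ref{lem-4-3} (a transversely flat Sasaki manifold with compact leaf space has closed Reeb orbits).

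Your proposal has a genuine gap, which you yourself flag in the last paragraph but do not close. The assertion that some cover $\widehat M$ fibres over $V_+\times\C^{m_0}$ with fibre a single Reeb leaf presupposes precisely that the leaf space is a Hausdorff manifold carrying the transverse K\"ahler metric, and Molino's structure theory does not deliver this: what it produces is a fibration of the transverse orthonormal frame bundle by \emph{leaf closures} over a basic manifold $W$, and neither $W$ nor any quotient by the structural Lie algebra is the complete simply connected K\"ahler manifold to which you want to apply de Rham decomposition. Passing to the universal cover of $M$ does not help either, since the Reeb leaf space upstairs need not be Hausdorff. Your exactness argument---if the fibre were $\R$ then $\widehat\eta=dt+\pi^*\beta$ would force the K\"ahler form of the compact factor $V_+$ to be exact---is correct and elegant \emph{once} the product $\R_t\times V_+\times\C^{m_0}$ is in hand; but establishing that product structure is essentially equivalent to what you are trying to prove. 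The paper sidesteps this by never forming a leaf space: Myers' theorem is applied to the Sasaki leaves of $\cD_1\oplus\langle\xi\rangle$ sitting inside $M$, not to a putative transverse factor $V_+$, and the flat Sasaki model $E^{2k+1}$ (rather than $\C^k$) keeps the Reeb direction in the picture throughout.
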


First we have, 

 \begin{lem}\label{lem-4-1}
 For each $i$, the sub-bundle $\cD_i\oplus\langle\xi\rangle$ gives rise to a foliation of $M$ whose leaves are all totally geodesic.
 \end{lem}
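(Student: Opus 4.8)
The plan is to show that the distribution $\mathcal{D}_i \oplus \langle \xi \rangle$ is integrable and that each integral leaf is totally geodesic in $(M,g)$. Integrability should follow from the transverse splitting together with basic Sasaki identities. Recall that for $Y, Z \in \Gamma(\mathcal{D})$ one has the standard relation between the Levi--Civita connection $\nabla$ of $g$ and the transverse connection $\nabla^T$, namely $\nabla_X Y = \nabla^T_X Y - \langle \Phi X, Y\rangle \xi$ for $X, Y \in \Gamma(\mathcal{D})$ (here the $\xi$-component is controlled by $d\eta$), and $\nabla_\xi Y = \nabla^T_\xi Y + \Phi Y$ (so the component of $\nabla_\xi Y$ in $\mathcal{D}$ differs from $\nabla^T_\xi Y$ only by $\Phi Y$), while $\nabla_Y \xi = -\Phi Y$. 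First I would use these to check that $[\,\mathcal{D}_i \oplus \langle\xi\rangle\,,\, \mathcal{D}_i \oplus \langle\xi\rangle\,]$ stays inside $\mathcal{D}_i \oplus \langle\xi\rangle$: for $Y, Z \in \Gamma(\mathcal{D}_i)$ the $\mathcal{D}$-part of $[Y,Z]$ equals $\nabla^T_Y Z - \nabla^T_Z Y$, which lies in $\mathcal{D}_i$ since $\mathcal{D}_i$ is $\nabla^T$-invariant, and the remaining part is a multiple of $\xi$; and $[\xi, Y]$ has $\mathcal{D}$-part $\nabla^T_\xi Y \in \mathcal{D}_i$ by invariance again (the $\xi$ flow preserves each $\mathcal{D}_i$ because the splitting of $\omega^T$ is $\xi$-invariant, being a transverse tensor). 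Hence the distribution is integrable and we get a foliation.

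Next I would verify the leaves are totally geodesic, i.e. that $\nabla_V W \in \Gamma(\mathcal{D}_i \oplus \langle\xi\rangle)$ whenever $V, W \in \Gamma(\mathcal{D}_i \oplus \langle\xi\rangle)$. Write $V = Y + a\xi$, $W = Z + b\xi$ with $Y, Z \in \Gamma(\mathcal{D}_i)$. Expanding $\nabla_V W$ using the formulas above, the potentially problematic terms are $\nabla_Y Z$ and $\nabla_\xi Z$ and $\nabla_Y \xi$. The $\mathcal{D}$-component of $\nabla_Y Z$ is $\nabla^T_Y Z \in \mathcal{D}_i$ by invariance; the $\xi$-component is harmless. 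The term $\nabla_Y \xi = -\Phi Y$, and since each $\omega_i$ (equivalently each $\mathcal{D}_i$) is $\Phi$-invariant — the splitting of $\omega^T$ being a splitting of complex subspaces for the transverse complex structure — we get $\Phi Y \in \mathcal{D}_i$. Likewise $\nabla_\xi Z$ has $\mathcal{D}$-part $\nabla^T_\xi Z + \Phi Z \in \mathcal{D}_i$. Collecting terms, $\nabla_V W$ has no component in $\mathcal{D}_j$ for $j \neq i$, which is exactly the total geodesy statement (the second fundamental form of the leaf vanishes).

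The step I expect to require the most care is the bookkeeping of the $\xi$-directions and the verification that $\mathcal{D}_i$ is genuinely $\Phi$-invariant and $\xi$-invariant: one must use that the maximal transverse de Rham decomposition $\omega^T = \omega_1 \oplus \cdots \oplus \omega_r$ is a decomposition compatible with the transverse complex structure (so each $\mathcal{D}_i$ is $\Phi$-stable) and that it descends from a \emph{transverse} object, hence is preserved by the Reeb flow (so $[\xi, \Gamma(\mathcal{D}_i)] \subset \Gamma(\mathcal{D}_i \oplus \langle\xi\rangle)$, and in fact $\nabla^T_\xi$ preserves $\mathcal{D}_i$). Once these two invariance properties are in hand, the computation is a routine application of the Sasaki connection formulas; note that the curvature hypotheses on the $\omega_i$ in Theorem \ref{thm-4-1} play no role here and will only enter afterward, when one uses de Rham-type arguments on the leaf space to upgrade the foliation to a quasi-regular structure.
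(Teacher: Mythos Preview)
Your proposal is correct and follows the same approach as the paper: the paper's proof is literally a one-line remark that this is a straightforward calculation using the Sasaki identities $\nabla_{\widetilde X}\xi \in \cD$ (equal to $\pm\Phi\widetilde X$ depending on convention) and $\nabla_{\widetilde X}\widetilde Y=\nabla^T_X Y-\langle JX,Y\rangle\xi$ for $\widetilde X,\widetilde Y\in\cD$, and you have simply carried out that calculation in detail, including the integrability check (which in any case follows from the totally-geodesic computation via $[V,W]=\nabla_V W-\nabla_W V$). Your observations that $\cD_i$ must be $\Phi$-stable and $\xi$-invariant, and that the curvature hypotheses on the $\omega_i$ play no role here, are exactly right; up to harmless sign conventions in the formulas for $\nabla_Y\xi$ and $\nabla_\xi Y$, nothing needs to be changed.
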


\begin{proof}One can check this by a straightforward calculation, using the fact that $\nabla _{\widetilde X}\xi=\widetilde X$ for $\widetilde X\in\cD$, and $\nabla_{\widetilde X} \widetilde Y=\widetilde{\nabla_X Y}-\langle JX, Y\rangle \xi.$
\end{proof}
Given this, we obtain

\begin{lem} \label{lem-4-2}
If for some $i\neq j$ the transverse metric $\omega_i$ and $\omega_j$ have positive transverse Ricci curvature, then $M$ is quasi-regular.
\end{lem}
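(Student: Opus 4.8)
The idea is to exploit the totally geodesic foliations $\mathcal F_i$ tangent to $\cD_i \oplus \langle \xi \rangle$ coming from Lemma \ref{lem-4-1}, together with the positive transverse Ricci curvature hypothesis on the two blocks $\omega_i$ and $\omega_j$, to force the Reeb field $\xi$ to generate a locally free $S^1$-action. Concretely, pick any leaf $N_i$ of $\mathcal F_i$ through a point $p$. Since $N_i$ is totally geodesic and $\xi$ is tangent to it and Killing, $(N_i, g|_{N_i})$ is again a Sasaki manifold (of dimension $2n_i+1$, where $2n_i = \operatorname{rk}\cD_i$), whose transverse K\"ahler structure is, up to isometry of the local leaf spaces, the factor $(V_i, g_i^T)$ with K\"ahler form $\omega_i$. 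The hypothesis says the transverse Ricci form of $N_i$ is positive, so $N_i$ has positive transverse Ricci curvature; the same holds for $N_j$. I would then invoke the structure theory already available for Sasaki manifolds with positive transverse Ricci curvature — in particular the fact, used repeatedly in this circle of ideas (and in \cite{FOW}, \cite{He-Sun}), that a compact Sasaki manifold with positive transverse Ricci curvature has a quasi-regular Reeb field in its Sasaki cone, and more to the point that the basic first Chern class being positive forces, via an averaging/Myers-type argument applied to the transverse metric, the leaf space to be compact; equivalently $\xi$ restricted to $N_i$ generates a compact one-parameter group, i.e. an $S^1$.

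The key step is then to observe that the generic leaf of $\mathcal F_i$ and the generic leaf of $\mathcal F_j$ share the Reeb orbits: both $N_i$ and $N_j$ contain the $\xi$-orbit through $p$, because $\xi \in \Gamma(\cD_i \oplus \langle\xi\rangle) \cap \Gamma(\cD_j\oplus\langle\xi\rangle)$. So the closure of the Reeb orbit through $p$ sits inside $N_i$, and inside $N_i$ that closure is a single circle by the previous paragraph. Since this holds at every point, every Reeb orbit is closed, hence the flow of $\xi$ is periodic (with periods a priori varying, but lower semicontinuous and locally constant on a dense set by the usual Wadsley-type argument, hence globally bounded on the compact $M$), which is exactly the statement that $(M,\xi,\eta,g)$ is quasi-regular.

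I would present the argument in this order: (1) restrict to a leaf $N_i$ and verify it is a compact Sasaki manifold with the induced structure, its transverse K\"ahler metric being $\omega_i$ — this is where Lemma \ref{lem-4-1} is used; (2) apply the positive-transverse-Ricci hypothesis on $N_i$ to conclude the Reeb orbits inside $N_i$ are closed (Myers' theorem on the transverse metric plus completeness of the basic foliation, or directly citing the compactness of the leaf space for positively Ricci-curved transverse K\"ahler structures); (3) note the Reeb orbit through any $p$ lies in such an $N_i$, hence is closed, hence $M$ is quasi-regular.

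The main obstacle, and the place I would be most careful, is step (2): making rigorous that ``positive transverse Ricci curvature'' upgrades to ``closed Reeb orbits.'' For a general Sasaki manifold positive transverse Ricci curvature does \emph{not} by itself imply quasi-regularity — one genuinely needs that $N_i$ is transversely \emph{positive} in the sense of having a transverse metric of positive Ricci isometric to the factor $\omega_i$, and then one runs the argument that the transverse distance function (pulled back locally) is bounded by Myers, so each local leaf space $V_i$ is a compact simply connected Fano-type manifold, and the holonomy of the foliation is finite. I would need to check that the ``two factors'' hypothesis ($i \neq j$ both positively curved) is really what makes this work versus the single-factor case; the point is presumably that having the same $\xi$ sit in two different totally geodesic positively-curved transverse directions rigidifies the flow enough — but in fact I suspect the cleaner route, and the one I would ultimately write, is that positive transverse Ricci on a single factor already suffices to close up $\xi$ inside $N_i$, and the lemma as stated only needs \emph{one} such $i$; the hypothesis on both $i,j$ is stated for symmetry of application in the proof of Theorem \ref{thm-4-1}. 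So the delicate point reduces to a clean citation or short proof that a compact Sasaki manifold with positive \emph{basic} first Chern class has closed Reeb orbits.
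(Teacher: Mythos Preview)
Your proposal has a genuine gap at step (2), and your closing speculation that a single positively curved factor suffices is incorrect --- this is precisely the content the lemma rules out, since an irregular Sasaki--Einstein manifold has positive transverse Ricci curvature but non-closed Reeb orbits. The claim ``positive transverse Ricci on $N_i$ forces the Reeb orbits in $N_i$ to be closed'' is false in general, and none of the ingredients you cite (Myers on the transverse metric, compactness of the local leaf space, positive basic first Chern class) imply it; you yourself flag this as the obstacle, and it does not go away.

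The paper's proof uses both factors in an essential and much more direct way. First, one does \emph{not} work with the transverse Ricci curvature of $N_i$; instead a $D$-homothetic transformation (as in \cite{BGN}) arranges that the \emph{full} Riemannian Ricci curvature of the induced metric on each leaf of $\cD_i\oplus\langle\xi\rangle$ is positive. Since the leaf is totally geodesic (Lemma~\ref{lem-4-1}) its induced metric is complete, so Myers' theorem gives that every such leaf is compact --- this is how compactness of $N_i$ is obtained, not by any transverse argument. The same applies to the leaves of $\cD_j\oplus\langle\xi\rangle$. The key observation is then purely topological: through any point $p$, the Reeb orbit is exactly the \emph{transversal intersection} of the leaf of $\cD_i\oplus\langle\xi\rangle$ with the leaf of $\cD_j\oplus\langle\xi\rangle$, since $\cD_i$ and $\cD_j$ are complementary in $\cD$. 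A transversal intersection of two compact submanifolds is a compact submanifold, here one-dimensional, hence a finite union of circles. This is why the hypothesis genuinely requires $i\neq j$.
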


 \begin{proof}
Assume $\cD_1$ and $\cD_2$ have positive transverse Ricci curvature. By a $D$-homothetic transformation as in \cite{BGN} (proof of Theorem A), we can assume that $\cD_1$ and $\cD_2$ have positive Ricci curvature. 
 By the previous lemma any leaf of the foliation generated by $\cD_1\oplus\langle\xi\rangle$ is totally geodesic, and so the induced metric is complete with positive Ricci curvature.
 By Meyer's theorem all the leaves must be compact. Similarly this also holds for leaves of the foliation generated by $\cD_2\oplus\langle\xi\rangle$. A Reeb orbit is a transversal intersection of these two types of leaves, thus  is a compact one dimensional submanifold. This proves the lemma.
 \end{proof}

Lemma \ref{lem-4-2} shows that in general no splitting can occur for irregular manifolds with positive transverse Ricci curvature. In particular, there is no join construction for irregular Sasaki-Einstein metrics, i.e. an irregular Sasaki-Einstein manifold is locally irreducible. 

Before we prove Theorem \ref{thm-4-1}, we need  to recall some facts on transversely flat Sasaki manifolds. Let $N$ be a (possibly noncompact) Sasaki manifold with vanishing transverse curvature of dimension $2k+1$.
 By Tanno \cite{Tanno} we know the universal covering $\widetilde{N}$ is,  up to $\cD$-homothetic transformation, isomorphic to  the Euclidean space $E^{2k+1}$ with the standard Sasaki structure. In the coordinate $(x^1, \cdots, x^k, y^1, \cdots, y^k, z)$ we have
$$\eta=dz-\sum_{i=1}^ky^idx^i, \ \  \xi=\frac{\partial}{\partial z}$$ and $$g=\sum_{i=1}^k(dx^i\otimes dx^i+dy^i\otimes dy^i)+\eta\otimes \eta.$$
Denote by $G$ the automorphism group of $E^{2k+1}$ and $\g$ the Lie algebra of $G$. Write an element of $\g$ as $$X=\sum_{i=1}^k (A^i \frac{\p}{\p x^i}+B^i\frac{\p}{\p y^i})+C\frac{\p}{\p z}, $$
 then one can directly work out the condition for $X$ to preserve $g$ and $\eta$(c.f. \cite{Namba}). Any $X$ in $\g$ is generated by a function $f$ on $\C^k=\R^{2k}$ by
$$X_f=\sum_{i=1}^k (f_{y^i}\frac{\p}{\p x^i }-f_{x^i}\frac{\p}{\p y^i })+(\sum_{i=1}^ky^i\frac{\p f}{\p y^i }-f)\frac{\p}{\p z }, $$
where $f$ has the form
 $$f(x, y, z)=c_0+\sum_{i=1}^k(a_i x^i+b^i y^i)+\sum_{i,j=1}^k f_{ij}(x^ix^j+y^iy^j)+h_{ij}(x^iy^j-x^jy^i)$$ with $f_{ij}=f_{ji}$ and $h_{ij}=h_{ji}$.
  Descending to the quotient $\C^k$, constant function gives rise to zero, linear terms give rise to translations, and the  above quadratic terms give rise to unitary transformations of $\C^k$. Let $\cF_\xi$ be the Reeb foliation on $N$. Since the Sasaki manifold  $N$ is a quotient of $E^{2n+1}$, the quotient $N/\cF_\xi$ is a quotient of $\C^n$, and hence is Hausdorff. We have

\begin{lem} \label{lem-4-3}
If $N/\cF_\xi$ is compact, then $N$ is also compact, i.e. all Reeb orbits are closed.
\end{lem}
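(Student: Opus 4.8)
\medskip

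The plan is to lift everything to the universal cover and use the explicit model recalled just above. Up to a $\cD$-homothety write $\widetilde N = E^{2k+1}$ with its standard Sasaki structure, let $\Gamma = \pi_1(N) \subset G = \Aut(E^{2k+1})$ be the deck group, acting freely and properly discontinuously (and note $\Gamma$ is countable). Let $Z \cong \R$ be the central one-parameter subgroup of $G$ given by the $z$-translations $z \mapsto z+t$ --- this is precisely the flow of $\xi$ --- let $\pi\colon G \to G/Z = \text{Isom}(\C^k)$ be the projection, and put $K := \Gamma \cap Z$ and $\bar\Gamma := \pi(\Gamma)$. Since $\Gamma$ acts properly discontinuously, $K$ is discrete in $Z \cong \R$, so $K = \{0\}$ or $K \cong \Z$; and, as recalled above, $N/\cF_\xi = \C^k/\bar\Gamma$, which by hypothesis is compact (and Hausdorff).

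The crux is to show $K \cong \Z$, i.e. that some non-trivial element of $\Gamma$ is a pure $z$-translation. First I would verify that $\bar\Gamma$ is discrete: Hausdorffness of $\C^k/\bar\Gamma$ makes every $\bar\Gamma$-orbit closed, and for an isometric action on $\R^{2k}$ with linear parts in the compact group $U(k)$ one has $\overline{\bar\Gamma\cdot w} = \overline{\bar\Gamma}\cdot w$ (if $g_n w$ converges, the translation parts of $g_n$ are bounded, so $g_n$ subconverges in $\overline{\bar\Gamma}$); hence each $\overline{\bar\Gamma}$-orbit equals the corresponding closed $\bar\Gamma$-orbit, which is countable, so the connected group $\overline{\bar\Gamma}^{\,0}$ fixes every point of $\C^k$ and therefore $\overline{\bar\Gamma}^{\,0}=\{1\}$, whence $\bar\Gamma$ is discrete. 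Being also cocompact, $\bar\Gamma$ is a crystallographic group, so by Bieberbach's first theorem its translation subgroup $\Lambda := \bar\Gamma \cap \C^k$ has finite index and spans $\C^k = \R^{2k}$ over $\R$. Now $\omega^T = \tfrac12 d\eta$ descends to the standard (nondegenerate) symplectic form $\omega$ on $\C^k$, so since $\Lambda$ spans there are $v_1, v_2 \in \Lambda$ with $\omega(v_1,v_2)\neq 0$; lifting to $\widetilde v_1, \widetilde v_2 \in \Gamma$ and using that a pure translation lifts into the Heisenberg subgroup of $G$, the commutator $[\widetilde v_1, \widetilde v_2]$ is the central element of $Z$ whose $z$-coordinate is a nonzero multiple of $\omega(v_1, v_2)$. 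This commutator lies in $\Gamma \cap Z = K$, so $K \neq \{0\}$.

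To conclude, fix $c>0$ with $K = \langle z \mapsto z+c\rangle$ and a compact set $\overline D \subset \C^k$ with $\bar\Gamma \cdot \overline D = \C^k$ (which exists since $\C^k/\bar\Gamma$ is compact). Then $C := \overline D \times [0,c] \subset E^{2k+1}$ is compact, $K\cdot C = \overline D \times \R$, and since every element of $G$ preserves $\xi$ and hence maps each Reeb line to the Reeb line over its image point in $\C^k$, one gets $\Gamma\cdot C = \bigcup_{\gamma\in\Gamma}\bar\gamma(\overline D)\times\R = \pi^{-1}(\bar\Gamma\cdot\overline D) = E^{2k+1}$. Therefore $N = E^{2k+1}/\Gamma$ is compact; equivalently, $K$ nontrivial forces every Reeb orbit of $N$ to be a circle.

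I expect the only delicate point to be the discreteness of $\bar\Gamma$ (so that Bieberbach's theorem is applicable), equivalently the exclusion of the case $K=\{0\}$: note that if $K=\{0\}$ the commutator identity above would force $\Lambda$ to be $\omega$-isotropic, hence of real rank $\le k<2k$, which is exactly the contradiction one is after. The remaining steps are routine manipulations with the explicit Heisenberg model.
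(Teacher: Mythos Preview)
Your argument is correct and shares the paper's core mechanism: lift two independent translations of the flat base to the Heisenberg model $E^{2k+1}$ and observe that their commutator is a nontrivial central $z$-translation, forcing $\Gamma\cap Z\cong\Z$ and hence compactness of $N$.

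The difference is one of rigor rather than strategy. The paper simply asserts that if $N/\cF_\xi$ is compact then it is a complex torus $\C^k/\Gamma$ with $\Gamma$ a full-rank lattice of translations, and then computes the commutator of two basis translations directly. You do not assume this: you first prove that $\bar\Gamma$ is discrete (via the closed-orbit/compact-linear-part argument), invoke Bieberbach to extract a finite-index translation lattice $\Lambda$ spanning $\R^{2k}$, and only then run the commutator computation on elements of $\Lambda$. This extra step is genuinely needed, since a priori elements of $\bar\Gamma$ could have nontrivial unitary part (and the projection of a discrete $\Gamma\subset G$ need not be discrete without an argument). So your route buys a cleaner justification of the key reduction; the paper's version is terser but leaves that reduction implicit.
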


\begin{proof} If $N/\cF_\xi$ is compact, then it is a complex torus $\C^k/\Gamma$, where $\Gamma$ is a lattice. Denote the generator of $\Gamma$ by $U_i=(A_i, B_i)\in \C^k=\R^k\oplus \R^k$ for $i=1, \cdots, 2k$. It is easy to see that the lifted action of $U_i$ on $E^{2k+1}$ is given by $U_i.(X, Y, z)=(X+A_i, Y+B_i, z-B_i X^T)$, where $X, Y\in \R^k,$ and $z\in \R$.    Let $\tilde{\Gamma}$ be the subgroup of $G$ generated by $U_i$'s. Denote by $\Omega$ the standard symplectic form  on $\R^{2k}$:
$$\Omega((X_1,Y_1), (X_2, Y_2))=X_1Y_2^T-X_2Y_1^T.$$ Since $U_1$, $\cdots$, $U_{2k}$ is a real basis of $\R^{2k}$,  there is some $U_i$, say $U_2$ such that
$\Omega(U_1, U_2)\neq 0$.
It is easy to check
$$U_2. U_1. (X, Y, z)=(X+A_1+A_2, Y+B_1+B_2, z-(B_1+B_2)X^T-A_1 B_2^T, $$
therefore  we see the element $(X, Y, z)\mapsto (X, Y, z-\Omega(U_1, U_2))$ is in $\tilde{\Gamma}$. Then it is clear that $N$ is compact.
\end{proof}

This lemma implies that there is no non-compact Sasaki manifold whose transverse geometry is a compact flat torus. If such manifold existed, it would give rise to examples which contradicts Theorem \ref{thm-4-1}.  For if $N$ were such a manifold,  one could take an irregular weighted Sasaki sphere with positive transverse bisectional curvature, say $S$, and take the product $M=N\times_{\R} S$. The  $\R$ action on both $N$ and $S$ would be a proper action hence would induce a proper action on $N\times S$. The quotient space $M$ would be Hausdorff, irregular, with non-negative transverse bisectional curvature, and  reducible. \\

Now we are ready to prove Theorem \ref{thm-4-1}.  It follows essentially the same consideration in the proof of de Rham decomposition theorem, see \cite{KN}. 
\begin{proof}
By Lemma \ref{lem-4-2},  we only need to consider the case that there is exactly one factor, say $\cD_1$, with positive transverse Ricci curvature, and all others are transversely flat. Denote by $\cD'$ the direct sum of all flat factors. As in the proof of Lemma \ref{lem-4-2} all leaves of $\cD_1\oplus\langle\xi\rangle$ are compact, and the leaves of the foliation $\cD'\oplus\langle\xi\rangle$ are transversely flat. Choose one leaf of $\cD_1\oplus\langle\xi\rangle$, called $S$, with the induced Sasaki structure. Let $\pi: \widetilde{S}\rightarrow S$ be the universal covering map. $\widetilde S$  is compact again by Meyer's theorem.  Any leaf of $\cD'\oplus\langle\xi\rangle$ is transversely flat, so  by the above discussion is covered by $E^{2k+1}$.  Now we define a map $P: \widetilde S\times E^{2k+1}\rightarrow M$ as follows. Fix a point $x_0$ in $\widetilde{S}$, we have a splitting $\cD=\cD_1\oplus \cD'$ at $\pi(x_0)$. Now we choose an identification of $(\cD'\oplus \langle\xi\rangle|_{\pi(x_0)}, 0)$ with $(E^{2k+1}, 0)$ which preserves the Sasaki structure. This identification would propagate over the whole $\widetilde{S}$ by simply-connectivity and the rigidity of $E^{2k+1}$.
Then for any $(s, e)\in \widetilde S \times E^{2k+1}$ we can define $P(s,e)$ to be the exponential map at $\pi(s)$ along the direction of $e$.   It is straightforward to check that $P$ is well-defined, both open and closed, and invariant under the natural action of $\R$. Thus it descends to a local isometry (indeed a local isomorphism of Sasaki structure) $\hat{P}$ from $\widetilde{S}\times_{\R} E^{2k+1}$ to $M$, which preserves the Sasaki structure.  Thus $\hat{P}$ is a universal covering map, and $M$ is a quotient of $\widetilde{S}\times_{\R} E^{2k+1}$ by a discrete subgroup of isometries.  By compactness, the only discrete subgroup of isomorphisms of the Sasaki structure on $\widetilde{S}$ must be a finite group.

We  claim that any isomorphism of $\widetilde S \times_{\R} E^{2k+1}$  lifts to an isomorphism of $\widetilde S\times E^{2k+1}$, which is simply the product of an element in $Aut(\widetilde S)$ and an element in $Aut(E^{2k+1})$. To see this, we notice that $\widetilde S \times_{\R} E^{2k+1}$ is simply a $\widetilde S$ bundle over $\C^k$. Any isomorphism $\phi$ must preserve the distribution of positive curvature, so must map a fiber to a fiber, and descends to an isomorphism $\phi'$ of $\C^k$. On the other hand, any isomorphism $\phi'$ of $\C^k$ lifts to an isomorphism $\phi''$ of $E^{2k+1}$ by the previous discussion, and so can be viewed as an isomorphism of $\widetilde S\times E^{2k+1}$. After composing with $(\phi'')^{-1}$ we may assume $\phi$ maps every fiber to itself. On each fiber $\phi$ induces an isomorphism of $\widetilde S$. On the other hand, any isomorphism of $\widetilde S$ is automatically an isomorphism  of $\widetilde S\times_{\R} E^{2k+1}$. So after composing with such an isomorphism we may assume $\phi$ in addition is identity on one fiber $\widetilde S$. On this fiber we choose an arbitrary point $p$, then $\phi(p)=p$ and $d\phi$ is identity on the tangent space at $p$.  It follows that  $\phi$ is identity and hence we complete the proof of the claim.

Now $M$ is  a quotient of $\widetilde S\times E^{2k+1}$ by a commutative action of a discrete subgroup $G$ of isomorphisms  and $\R$.  The induced action of $G$ on $E^{2k+1}$   would then have a compact quotient. By the proof of Lemma \ref{lem-4-3}, $G$ must contain the element in $Aut(E^{2k+1})$ which is $z\mapsto z+c$ for some $c\neq 0$. Now since $M$ is Hausdorff, the quotient of $\widetilde{S}$ by the $e^{cl\xi}$ (for some nonzero $l\in \Z$) is also Hausdorff, which is equivalent to $\widetilde S$ being quasi-regular.  Thus $M$ is quasi-regular. This proves Theorem \ref{thm-4-1}.
\end{proof}

Now we assume $(M, \xi, \eta, g)$ has nonnegative transverse bisectional curvature. First we have the following, whose proof is identical to that in \cite{HSW} and so we omit:

\begin{lem}
Let $(M, \xi, \eta, g)$ be a compact Sasaki manifold with non-negative transverse bisectional curvature, then any real basic harmonic $(1,1)$ form on $M$ is transversely parallel, i.e.
if $\Delta_B\alpha=0$, then $\nabla^T\alpha=0$.
\end{lem}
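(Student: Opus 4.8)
The plan is to carry out the classical Bochner argument of Howard--Smyth--Wu \cite{HSW} transversally. Fix a real basic harmonic $(1,1)$-form $\alpha$ on $M$, i.e. $\iota_\xi\alpha=0$, $\mathcal L_\xi\alpha=0$ and $\Delta_B\alpha=0$. First I would pass to the local K\"ahler quotients. Over a foliation chart $U_\alpha$ with submersion $\pi_\alpha\colon U_\alpha\to V_\alpha$, a basic form is exactly the pullback of a form on $V_\alpha$, and under this identification the transverse Levi--Civita connection $\nabla^T$, the transverse curvature operator and the basic Laplacian $\Delta_B$ become, respectively, the Levi--Civita connection, the curvature operator and the Hodge Laplacian of the K\"ahler manifold $(V_\alpha,g^T_\alpha)$; moreover the transverse K\"ahler identities hold. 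Hence $\alpha$ corresponds on each $V_\alpha$ to an honest harmonic real $(1,1)$-form $\alpha_\alpha$ on $(V_\alpha,g^T_\alpha)$, whose bisectional curvature is nonnegative by hypothesis, and these local pictures agree on overlaps by the cocycle condition \eqref{E-cocycle}.

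Next I would invoke the pointwise Bochner identity for harmonic real $(1,1)$-forms on a K\"ahler manifold. On each $V_\alpha$,
\[
\tfrac12\,\Delta_{g^T_\alpha}|\alpha_\alpha|^2=|\nabla^{g^T_\alpha}\alpha_\alpha|^2+Q(\alpha_\alpha),
\]
where $\Delta_{g^T_\alpha}$ is the Laplace--Beltrami operator (trace of the Hessian) and $Q(\alpha_\alpha)$ is the Weitzenb\"{o}ck curvature term $\langle\mathcal R\alpha_\alpha,\alpha_\alpha\rangle$. The content of the Howard--Smyth--Wu computation is that, after diagonalizing $\alpha_\alpha$ by a unitary frame at a point so that its components are $\lambda_1,\dots,\lambda_n$, the closedness and coclosedness of $\alpha_\alpha$ together with the first Bianchi identity collapse this term to
\[
Q(\alpha_\alpha)=\sum_{i<j}R_{i\bar i j\bar j}\,(\lambda_i-\lambda_j)^2\ \ge\ 0,
\]
the inequality being precisely the assumption of nonnegative bisectional curvature. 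Since the local data glue, pulling back along the $\pi_\alpha$ yields the same identity globally on $M$, namely $\tfrac12\Delta^T|\alpha|^2=|\nabla^T\alpha|^2+Q^T(\alpha)$ with $Q^T(\alpha)\ge 0$, where $\Delta^T$ is the transverse Laplacian and $|\cdot|$ the transverse norm.

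Finally I would globalize on the compact total space. The function $u:=|\alpha|^2$ is basic, since $\alpha$ is basic and $g^T$ is $\xi$-invariant, and for a basic function on a Sasaki manifold $\Delta^T u=\Delta_g u$ because $\xi u=0$ and $\nabla_\xi\xi=0$. Thus the identity reads $\tfrac12\Delta_g u=|\nabla^T\alpha|^2+Q^T(\alpha)\ge 0$, so $u$ is subharmonic on the closed manifold $(M,g)$ and therefore constant; equivalently, integrating over $M$ gives $\int_M\big(|\nabla^T\alpha|^2+Q^T(\alpha)\big)\,dV_g=0$. As both integrands are nonnegative, they vanish identically, and in particular $\nabla^T\alpha\equiv 0$, i.e. $\alpha$ is transversely parallel.

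The genuinely delicate ingredient is the one this statement suppresses as ``identical to \cite{HSW}'': the algebraic reduction of the Weitzenb\"{o}ck term to the manifestly nonnegative $\sum_{i<j}R_{i\bar i j\bar j}(\lambda_i-\lambda_j)^2$. Without using that $\alpha_\alpha$ is closed \emph{and} coclosed, that term also carries off-diagonal curvature components $R_{i\bar j k\bar l}$ of indefinite sign, so harmonicity, not merely being of type $(1,1)$, is essential. By contrast the transverse adaptation is purely formal: each step of the Bochner argument is a pointwise tensor identity in the transverse geometry, which locally is ordinary K\"ahler geometry, and the only properly Sasaki input is $\Delta^T u=\Delta_g u$ for basic $u$, which is what lets one close the argument on the compact $M$ rather than on the noncompact leaf spaces $V_\alpha$.
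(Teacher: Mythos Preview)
Your proposal is correct and is exactly the argument the paper has in mind: the paper omits the proof entirely, saying only that it is ``identical to that in \cite{HSW},'' and what you have written is precisely the transverse adaptation of the Howard--Smyth--Wu Bochner argument, with the globalization carried out on the compact total space via $\Delta^T u=\Delta_g u$ for basic $u$. There is nothing to add.
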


Now for any real basic harmonic $(1,1)$ form $\alpha$ not proportional to $\omega^T$, the Hermitian linear transform determined by $\alpha$ with respect to $\omega^T$ will split the transverse geometry into a direct sum of orthogonal components, each of which is parallel in the transverse geometry.  It then follows that,

 \begin{lem}\label{lem-2-5}
 Let $(M, \xi, \eta, g)$ be a compact Sasaki manifold with nonnegative transverse bisectional curvature. Then $b^{1, 1}_B>1$ if and only if  $(M, \xi, \eta, g)$ is transverse reducible.
 \end{lem}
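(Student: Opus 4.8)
The plan is to prove the two implications of the equivalence separately; the forward implication is essentially a recapitulation of the discussion immediately preceding the lemma.

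\emph{Transverse reducibility $\Rightarrow b^{1,1}_B>1$.} Here I would take a maximal splitting $\cD=\cD_1\oplus\cdots\oplus\cD_r$ with $r\geq 2$ and $\omega^T=\omega_1\oplus\cdots\oplus\omega_r$, as furnished by the remarks preceding Theorem~\ref{thm-4-1}. Each $\omega_i$, extended by zero on $L\xi$ and on $\cD_j$ for $j\neq i$, is a real basic $(1,1)$-form: it is of type $(1,1)$ because $\cD_i$ is $\Phi$-invariant, and it is basic because it is assembled from the factors of the local K\"ahler products $(V_\alpha,g^T_\alpha)$, which match on overlaps by the cocycle condition. Since $\cD_i$ is invariant under $\nabla^T$ while $\Phi$ and $g^T$ are $\nabla^T$-parallel, the orthogonal projection onto $\cD_i$ is $\nabla^T$-parallel, hence each $\omega_i$ is transversely parallel; as the basic differential $d_B$ and codifferential $d^*_B$ are built from $\nabla^T$, this gives $d_B\omega_i=d^*_B\omega_i=0$, so $\Delta_B\omega_i=0$. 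The forms $\omega_1,\dots,\omega_r$ are supported on pairwise disjoint subbundles, hence linearly independent, and therefore $b^{1,1}_B\geq r\geq 2$.

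\emph{$b^{1,1}_B>1 \Rightarrow$ transverse reducibility.} Since $\omega^T$ is itself a real basic harmonic $(1,1)$-form, the hypothesis $b^{1,1}_B>1$ yields a real basic harmonic $(1,1)$-form $\alpha$ not proportional to $\omega^T$. By the preceding lemma $\nabla^T\alpha=0$. Let $A$ be the field of endomorphisms of $\cD$ associated to $\alpha$ via $\omega^T$, i.e.\ $\alpha=\omega^T(A\,\cdot,\,\cdot)$; since $\alpha$ is a real $(1,1)$-form, $A$ is $g^T$-self-adjoint and commutes with $\Phi$, and since $\alpha$ and $\omega^T$ are both $\nabla^T$-parallel, $\nabla^T A=0$. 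Because $\alpha$ is not proportional to $\omega^T$, $A$ is not a scalar multiple of $\I$, so — using that the characteristic polynomial of a $\nabla^T$-parallel endomorphism is constant on the connected manifold $M$ — $A$ has at least two distinct constant eigenvalues $\lambda_1,\dots,\lambda_k$ with $k\geq 2$. The eigenbundles $\cD_i=\Ker(A-\lambda_i\I)$ are then smooth subbundles, pairwise $g^T$-orthogonal by self-adjointness of $A$, $\Phi$-invariant since $[A,\Phi]=0$, and invariant under $\nabla^T$ because $AY=\lambda_i Y$ forces $A(\nabla^T_X Y)=\nabla^T_X(AY)=\lambda_i\nabla^T_X Y$. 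Thus $\cD=\cD_1\oplus\cdots\oplus\cD_k$ is a nontrivial orthogonal decomposition into invariant subbundles, i.e.\ $(M,\xi,\eta,g)$ is transverse reducible.

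The argument is short once the previous lemma is in hand; the two points that require a little care are that a transversely parallel basic form is automatically basic harmonic (immediate, since $d_B$ and $d^*_B$ are expressed through $\nabla^T$), and the standard de~Rham-type step that the eigenvalues of the parallel endomorphism $A$ are globally constant, so that its eigenspaces form genuine smooth parallel subbundles rather than merely pointwise linear-algebra data. I do not anticipate a serious obstacle beyond bookkeeping with the definitions of ``basic'' and ``invariant subbundle''.
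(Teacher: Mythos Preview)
Your proposal is correct and follows essentially the same approach as the paper's own proof: one direction uses the parallel Hermitian endomorphism associated to a second harmonic $(1,1)$-form to split $\cD$, and the other direction exhibits the restricted forms $\omega_i$ as linearly independent basic cohomology classes. You have simply filled in the details the paper leaves implicit (the eigenbundle argument for $A$, and the parallelism/harmonicity of the $\omega_i$), so there is nothing substantively different to compare.
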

\begin{proof}
By the discussion above, if $b^{1, 1}_B>1$, then $(M, \xi, \eta, g)$ is transverse reducible. On the other hand,
suppose $(M, \xi, \eta, g)$ is transverse reducible, namely $\cD=\cD_1\oplus \cD_2$, and $\cD_i$ is invariant with respect to $g^T$. Let $\omega_i=\omega^T|_{\cD_i}$, then $\omega^T=\omega_1\oplus  \omega_2$. It is easy to see that $[\omega_i]_B\neq 0$; moreover $[\omega_1]_B$ and $[\omega_2]_B$  are linearly independent in $H^{1, 1}_B(M, \cF_\xi).$ Hence $b^{1, 1}_B\geq 2$.
\end{proof}

Now suppose $(M, \xi, \eta, g)$ be a compact Sasaki manifold with nonnegative transverse bisectional curvature with $b^{1, 1}_B>1$.  Then there exists a maximal splitting of $\cD=\cD_1\oplus \cdots \oplus \cD_k$ such that $\cD_i$ is invariant with respect to $g^T$. Let $\omega_i=\omega^T|_{\cD_i}$, then $\omega^T=\oplus_i \omega_i$. Note that the transverse Ricci form $\rho^T$ has a natural splitting such that $\rho^T=\oplus \rho_i$. For each $i$, $\rho_i$ it is a well-defined closed basic $(1, 1)$ form, hence $[\rho_i]_B$ is well-defined. Clearly $c_1^B=[\rho^T]_B=\oplus_i [\rho_i]_B$.
 \begin{lem}\label{L-maximalsplitting} Let $(M, \xi, \eta, g)$ be a compact Sasaki manifold with nonnegative transverse bisectional curvature with $b^{1, 1}_B>1$. Then there are nonnegative constants $c_i$, $1\leq i\leq k$ such that
 \[
 c^B_1=\sum_i c_i[\omega_i]_B.
 \]
 \end{lem}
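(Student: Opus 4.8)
The plan is to transplant the Howard--Smyth--Wu argument \cite{HSW} to the transverse setting, using that on a compact Sasaki manifold with nonnegative transverse bisectional curvature every basic harmonic $(1,1)$-form is transversely parallel. First I would take the harmonic representative $\theta$ of the basic class $c_1^B=[\rho^T]_B\in H^{1,1}_B(M,\cF_\xi)$. Being transversely parallel, $\theta$ defines a parallel, $\Phi$-commuting Hermitian endomorphism of $\cD$, and since $\cD=\cD_1\oplus\cdots\oplus\cD_k$ is a \emph{maximal} splitting into $g^T$-invariant sub-bundles, each $\cD_i$ is irreducible for the transverse holonomy. By Schur's lemma this endomorphism is a scalar $\lambda_i$ on each block, i.e.
\[
\theta=\sum_{i=1}^k\lambda_i\,\omega_i,\qquad\text{and hence}\qquad c_1^B=\sum_{i=1}^k\lambda_i[\omega_i]_B .
\]
The whole content then reduces to proving $\lambda_i\ge 0$ for every $i$.

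To isolate $\lambda_i$ I would fix $i$, write $k_j:=\dim_{\C}\cD_j$ (so $\sum_jk_j=n$), and integrate the displayed identity against the closed basic form $\beta_i\wedge\eta$, where $\beta_i:=\omega_1^{k_1}\wedge\cdots\wedge\omega_i^{k_i-1}\wedge\cdots\wedge\omega_k^{k_k}$. Two elementary facts do the work. First, for $j\ne i$ the K\"ahler form $\omega_j$ already appears to its top power in $\beta_i$, so $\omega_j\wedge\beta_i=0$; thus only the $j=i$ term of the identity survives, and it produces $\omega_1^{k_1}\wedge\cdots\wedge\omega_k^{k_k}$, a positive multiple of $(\omega^T)^n$, whose integral against $\eta$ is positive. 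Second, $\int_M\alpha\wedge\beta_i\wedge\eta$ depends only on $[\alpha]_B$: for a basic $1$-form $\gamma$ one has $\int_M d\gamma\wedge\beta_i\wedge\eta=\int_M\gamma\wedge\beta_i\wedge d\eta=0$, the last integrand being a horizontal $(2n+1)$-form on a manifold whose contact distribution has rank $2n$. Using $\rho^T$ itself as a representative of $c_1^B$, these give that $\lambda_i$ is a positive multiple of $\int_M\rho^T\wedge\beta_i\wedge\eta$, so it suffices to show this integral is nonnegative.

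For that I use the block decomposition $\rho^T=\oplus_j\rho_j$ once more: since $\rho_j\wedge\omega_j^{k_j}=0$ for $j\ne i$, we have $\rho^T\wedge\beta_i=\rho_i\wedge\beta_i$. Nonnegative transverse bisectional curvature forces the transverse Ricci form $\rho^T$, hence each $\rho_i=\rho^T|_{\cD_i}$, to be a nonnegative $(1,1)$-form; diagonalizing $\rho_i$ against $\omega_i$ inside the $i$-th block then shows $\rho_i\wedge\beta_i$ is a nonnegative multiple of $(\omega^T)^n$, so $\rho_i\wedge\beta_i\wedge\eta$ is a nonnegative multiple of the volume form. Hence $\int_M\rho^T\wedge\beta_i\wedge\eta\ge 0$, so $\lambda_i\ge 0$, and we take $c_i=\lambda_i$.

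I expect the only genuinely delicate point to be the first step: that the transversely parallel representative $\theta$ is actually block-diagonal for the \emph{given} maximal splitting. When the irreducible factors are pairwise inequivalent as holonomy representations this is immediate from Schur's lemma; the transversely flat part, whose splitting into complex lines is not canonical, needs a separate remark, but there $\rho_i=0$ and so $\lambda_i=0$ automatically from the pairing above, and in any case one may choose the splitting of the flat part so as to diagonalize $\theta$. Everything else is the routine basic Hodge theory and positivity-of-$(1,1)$-forms bookkeeping indicated above.
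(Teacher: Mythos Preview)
Your argument is correct and follows essentially the same route as the paper: both use that under nonnegative transverse bisectional curvature the harmonic representative of $c_1^B$ is transversely parallel, and then invoke the irreducibility of each $\cD_i$ to force it to be a scalar multiple of $\omega_i$ on each block. The paper's proof differs only cosmetically: it works factor by factor, taking the harmonic projection $H\rho_i$ of each Ricci piece and observing that $H\rho_i\perp\omega_j$ for $j\neq i$ (so $H\rho_i$ is already supported on $\cD_i$, which sidesteps your Schur--lemma caveat about isomorphic factors), and it records the nonnegativity of $c_i$ in a single sentence, whereas you spell out the pairing against $\beta_i\wedge\eta$ that makes this explicit.
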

\begin{proof}
We only need to prove $[\rho_i]_B=c_i [\omega_i]_B$ for each $i$. Note that $\rho_i \perp \omega_j$ for $j\neq i$, it then follows the harmonic part $H\rho_i \perp \omega_j$ by transverse Hodge theory. If $H\rho_i$ is not proportional to $\omega_i$, then the Hermitian linear transformation determined by $H\rho_i$ and $\omega_i$ on $D_i$ will split $\cD_i$ into a direct sum of orthogonal components, contradiction since $\cD_i$ is irreducible. The non-negativity of $c_i$ follows directly from the non-negativity of the transverse bisectional curvature.
\end{proof}

The following lemma follows from the transverse Calabi-Yau theorem. For convenience we include a sketch of proof. 
 \begin{lem} \label{Calabi-Yau} There exists a Sasaki structure $(\xi, \tilde \eta, \tilde g)$ on $M$, which is a transverse K\"ahler deformation of $(
\xi, \eta, g)$, such that
\[
\widetilde{ Ric^T}=\sum_{i=1}^r c_i \omega_i.
\]
Moreover, the contact subbundle $\tilde \cD=\text{Ker}(\tilde \eta)$ admits a maximal splitting $\tilde \cD=\tilde\cD_1\oplus \cdots \oplus \tilde \cD_r$ such that \[\text{span}\{\xi, \tilde \cD_i\}=\text{span}\{\xi, \cD_i\}\] for any $1\leq i\leq r$.
\end{lem}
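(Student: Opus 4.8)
The plan is to produce $(\xi,\tilde\eta,\tilde g)$ from the transverse Calabi--Yau theorem, and then to check that the resulting Calabi--Yau deformation respects the decomposition $\cD=\cD_1\oplus\cdots\oplus\cD_r$. First I would record the cohomological input. By Lemma~\ref{L-maximalsplitting}, $c_1^B=\sum_{i=1}^r c_i[\o_i]_B$; and since each $\o_i$ is the restriction of the $\n^T$-parallel form $\o^T$ to the $\n^T$-parallel subbundle $\cD_i$ (extended by $0$ on the remaining factors), the form $\rho:=\sum_i c_i\o_i$ is a closed basic $(1,1)$ form representing $c_1^B$. Applying the transverse Calabi--Yau theorem (El Kacimi--Alaoui; see also \cite{FOW}) with prescribed transverse Ricci form $\rho$, there is a basic function $\phi$, unique up to an additive constant, such that $\tilde\o^T:=\o^T+\sqrt{-1}\,\p_B\bp_B\phi$ is a positive transverse K\"ahler form; the Sasaki structure $(\xi,\tilde\eta,\tilde g)$ it determines --- with $\tilde\eta=\eta+d^c_B\phi$ and $\tilde g=\tilde\eta\otimes\tilde\eta+\tilde g^T$ --- is a transverse K\"ahler deformation of $(\xi,\eta,g)$, and $\widetilde{Ric^T}=\rho=\sum_i c_i\o_i$. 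Concretely $\phi$ solves the transverse complex Monge--Amp\`ere equation $(\tilde\o^T)^n=e^F(\o^T)^n$, where $F$ is the basic function with $\sqrt{-1}\,\p_B\bp_B F=\rho^T-\rho$, normalized by $\int_M e^F(\o^T)^n\wedge\eta=\int_M(\o^T)^n\wedge\eta$.

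Next I would reduce the splitting assertion to an adaptedness property of $\phi$. A transverse K\"ahler deformation leaves the transverse holomorphic structure unchanged, so on each foliation chart $\pi_\alpha\colon U_\alpha\to V_\alpha$ the leaf space $V_\alpha$ is the holomorphic product $V_{\alpha,1}\times\cdots\times V_{\alpha,r}$ associated with $\cD=\oplus_i\cD_i$, with $g^T_\alpha$ the corresponding product metric, and this factorization is preserved by the transition maps \eqref{E-cocycle}. Hence it suffices to show that $\phi$ can be chosen \emph{split}, i.e.\ equal on each chart to a sum of functions, one on each factor $V_{\alpha,i}$ --- equivalently, with $\p_B\bp_B\phi$ block-diagonal with respect to $\cD=\oplus_i\cD_i$. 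For such a $\phi$, $\tilde g^T_\alpha$ is again a product metric for the same factorization, so the subbundles $\tilde\cD_i:=\tilde\cD\cap\text{span}\{\xi,\cD_i\}$ are $\tilde\n^T$-parallel and $\tilde g^T$-orthogonal, satisfy $\text{span}\{\xi,\tilde\cD_i\}=\text{span}\{\xi,\cD_i\}$, and give the desired splitting $\tilde\cD=\oplus_i\tilde\cD_i$.

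To obtain a split $\phi$ I would solve the Monge--Amp\`ere equation within the space of split basic functions. By Lemma~\ref{L-maximalsplitting}, $[\rho_i]_B=c_i[\o_i]_B$, so each $\rho_i-c_i\o_i$ is a closed basic $(1,1)$ form with trivial basic class whose only nonzero component is the $\cD_i$-block. Since the basic Laplacian commutes with the decomposition $\cD=\oplus_i\cD_i$ into $\n^T$-parallel subbundles, transverse Hodge theory (the transverse $\p\bp$-lemma, cf.\ \cite{HSW}), applied in this compatible setting, produces a basic function $F_i$ which is constant along the foliation generated by $\big(\oplus_{j\neq i}\cD_j\big)\oplus\langle\xi\rangle$ and satisfies $\sqrt{-1}\,\p_B\bp_B F_i=\rho_i-c_i\o_i$; hence $F=\sum_i F_i$ is split. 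Because $g^T_\alpha$ is a product metric, the transverse Monge--Amp\`ere operator applied to a split $\phi=\sum_i\phi_i$ decomposes as the sum of the corresponding operators on the blocks, so the closed subspace of split basic functions is invariant under the equation and under its linearization at a split metric (a sum of block Laplacians). Running the standard continuity method and a priori estimates for the transverse Monge--Amp\`ere equation on the compact manifold $M$ within this subspace yields a split solution, and by uniqueness (up to a constant) of the solution it coincides with the $\phi$ above.

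Finally I would check maximality of $\tilde\cD=\oplus_i\tilde\cD_i$. If some $\tilde\cD_i$ with $c_i>0$ were $\tilde g^T$-reducible, then the block $V_{\alpha,i}$ would split holomorphically with $\tilde g^T_{\alpha,i}$ a product metric, so its transverse Ricci form $c_i\o_i$ would be a sum of pullbacks from the factors, hence (as $c_i>0$) so would $\o_i$, making $g^T_{\alpha,i}$ a product metric and contradicting irreducibility of $\cD_i$. If $c_i=0$ then $[\rho_i]_B=0$ and, the transverse Ricci curvature being non-negative, $\rho_i\equiv0$; thus $F_i$ may be taken to be $0$, the corresponding block equation has constant right-hand side, which must equal $1$ by the volume normalization, and by uniqueness $\phi_i$ is constant, so $\tilde g^T_{\alpha,i}=g^T_{\alpha,i}$ and $\tilde\cD_i=\cD_i$ remains irreducible by hypothesis. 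The step I expect to be the main obstacle is exactly the persistence of the splitting under the Calabi--Yau deformation --- establishing that $F$, and then the Monge--Amp\`ere solution $\phi$, can be taken split. This requires transverse Hodge theory compatible with the $\n^T$-parallel decomposition (a ``sub-foliated'' $\p\bp$-lemma) together with a careful, though routine, adaptation of the continuity method and a priori estimates for the transverse complex Monge--Amp\`ere equation to the invariant subspace, and the uniqueness statement to identify the split solution with the Calabi--Yau one.
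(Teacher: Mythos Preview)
Your argument is correct and follows the same overall strategy as the paper --- apply the transverse Calabi--Yau theorem and then verify that the resulting potential is compatible with the decomposition $\cD=\oplus_i\cD_i$ --- but the paper obtains the split potential more economically. Rather than re-running the continuity method and a priori estimates for the transverse Monge--Amp\`ere equation inside the subspace of split basic functions, the paper simply invokes the Calabi--Yau theorem $r$ times as a black box: for each $i$ it takes a basic potential $f_i$ with $\sqrt{-1}\,\p\bp f_i=\rho_i-c_i\o_i$, solves the \emph{full} equation $\eta\wedge(d\eta+\sqrt{-1}\,\p\bp\phi_i)^n=e^{f_i}\,\eta\wedge(d\eta)^n$ on all of $M$, and then argues by the maximum principle that $Y\phi_i=0$ for every $Y\in\text{span}\{\xi,\cD_1,\ldots,\hat\cD_i,\ldots,\cD_r\}$; setting $\phi=\sum_i\phi_i$ gives $d\tilde\eta=\oplus_i(\o_i+\sqrt{-1}\,\p\bp\phi_i)$ directly. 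Your continuity-method route would work but is more than is needed once Calabi--Yau is available. Both approaches rest on the same preliminary point you correctly flag as the delicate one --- that the factor potentials $f_i$ (your $F_i$) are themselves constant along the complementary foliations --- and the paper is no more explicit about this than you are. Your treatment of maximality, separating the cases $c_i>0$ and $c_i=0$, is more thorough than the paper's (which does not spell it out); note, however, that in the case $c_i>0$ your argument passes from a $\tilde g^T$-splitting of $\tilde\cD_i$ to a $g^T$-splitting of $\cD_i$ via the Ricci form, and you should check that the resulting pieces of $\o_i$ are $\n^T$-parallel and not merely $\tilde\n^T$-parallel.
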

\begin{proof}The first part simply follows from the transverse Calabi-Yau theorem in \cite{Eka90}. It asserts that for any real basic $(1, 1)$ form $\theta$ in $c_1^B$, there exists a unique Sasaki structure $(\xi, \tilde \eta, \tilde g)$ with the same transverse complex structure of $(\xi, \eta, g)$, such that 
its Ricci form $\tilde \rho$ given by $\theta$. Hence we can take $\theta=\sum c_i \omega_i$. Moreover since the Ricci form $\tilde \rho$ splits, then the transverse metric $\tilde g^T$  splits correspondingly. To see this fix an $i$ between $1$ and $r$, denote by $f_i$ the normalized potential such that
\[
\rho_i-c_i\omega_i=\i \p\bar \p f_i, 1\leq i\leq r. 
\]
 By \cite{Eka90}, we can solve the equation for
\[
\eta\wedge (d\eta+\i\p\bar\p \phi_i)^n=\eta\wedge (d\eta)^n \exp(f_i). 
\]
Now by the maximum principle, we know that for any \[Y\in \text{span}\{\xi, \cD_1, \cdots, \hat \cD_i, \cdots, \cD_r\},\]  then $Y\phi_i=0$. Hence we know that for each $i$, there exists $\phi_i$ such that $\tilde \eta=\eta+d^c\phi$, where $\phi=\sum_i\phi_i$; and  $d\tilde \eta=\oplus_i (\omega_i+\i\p\bar \p \phi_i)$. In particular, $\tilde g^T$ splits correspondingly, and the second part of the proposition follows.

\end{proof}

We need to recall the following notion.
 \begin{defi}
 A Riemannian metric $(M, g)$ is \emph{locally symmetric} if the curvature satisfies $\nabla Rm=0$. A Sasaki structure $(M, \xi, \eta, g)$ is called \emph{transversely symmetric} if  any induced transverse K\"ahler metric $\{g^T_\alpha, V_\alpha\}$  is locally symmetric (locally Hermitian symmetric).
 \end{defi}

By  \cite{Taka}, in terms of our terminology, we have

\begin{thm}[Takahashi] \label{T-symmetric} A compact simply connected transverse symmetric Sasaki manifold is homogeneous. Hence it is  a principal $S^1$ bundle over a Hermitian symmetric space. \end{thm}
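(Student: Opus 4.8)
The plan is to reduce the statement to the classical theory of Riemannian submersions and the structure of compact simply connected Hermitian symmetric spaces. First I would observe that since $(M,\xi,\eta,g)$ is compact and transversely symmetric, the transverse Ricci curvature is parallel, hence in particular $M$ has a transverse K\"ahler--Einstein metric after a $\cD$-homothetic transformation (or one argues directly that the transverse scalar curvature is a nonzero constant — it cannot vanish, else the universal cover of the leaf space would be flat $\C^n$, forcing $M$ noncompact by Lemma \ref{lem-4-3}, contradicting compactness and simple-connectivity via an argument as in the remark following that lemma). Positivity of the transverse Ricci curvature then allows me to invoke Lemma \ref{lem-4-2} and Theorem \ref{thm-4-1}: $M$ is quasi-regular. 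Thus $M$ is a principal $S^1$-orbibundle over a compact K\"ahler orbifold $Z = M/\cF_\xi$ whose K\"ahler metric is locally Hermitian symmetric.

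Next I would upgrade the orbifold $Z$ to a genuine manifold and identify it. Since $M$ is simply connected and the fiber $S^1$ is connected, the long exact homotopy sequence of the orbibundle $S^1 \to M \to Z$ shows $\pi_1^{orb}(Z)$ is cyclic, generated by the image of $\pi_1(S^1)$; but a compact locally Hermitian symmetric space of positive Ricci curvature has finite orbifold fundamental group, and more to the point its universal (orbifold) cover $\widetilde Z$ is a product of compact irreducible Hermitian symmetric spaces of compact type, which are genuine simply connected manifolds. Pulling the orbibundle back to $\widetilde Z$ and using that $\widetilde Z$ is simply connected and smooth, the total space is a smooth principal $S^1$-bundle, and by simple-connectivity of $M$ one checks this pulled-back bundle is in fact $M$ itself with $Z = \widetilde Z$ smooth: a compact simply connected Sasaki manifold over a product $\widetilde Z = O_1 \times \cdots \times O_m$ is forced to have $b_2 = m$ and the Euler class to be a positive rational combination of the factors' K\"ahler classes, which by Lemma \ref{L-maximalsplitting}-type reasoning (or directly, since transverse symmetry makes $Z$ Einstein when irreducible) pins down $Z$ as a single irreducible Hermitian symmetric space unless $b_2 > 1$; in all cases $Z$ is a (product of) compact irreducible Hermitian symmetric space(s), hence homogeneous under a compact Lie group $K$ acting by holomorphic isometries.

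Finally, homogeneity of $M$ follows by lifting the transitive $K$-action on $Z$: the action lifts to an action on the principal $S^1$-bundle $M$ (after possibly passing to a cover of $K$, the obstruction living in $H^2$ which vanishes since $K$ is semisimple or one absorbs it into the $S^1$), commuting with the Reeb flow, and together with the Reeb $S^1$-action this gives a transitive action of a compact group on $M$ by Sasaki automorphisms. Hence $M$ is homogeneous Sasaki, and being a principal $S^1$-bundle over a Hermitian symmetric space is exactly the conclusion. I expect the main obstacle to be the bookkeeping in the second step — passing cleanly between the orbifold leaf space $Z$, its universal orbifold cover, and the requirement that $M$ (not just a cover of it) be the total space — together with verifying that transverse local symmetry really does force $Z$ to split as a product of the standard homogeneous models rather than some locally symmetric quotient; this is where one genuinely uses simple-connectivity of $M$ and the fact, essentially Berger/de Rham plus Cartan's classification in the compact-type case, that a simply connected compact locally Hermitian symmetric space is globally symmetric and homogeneous.
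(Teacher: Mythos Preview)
The paper does not give its own proof of this theorem: it is quoted as a result of Takahashi \cite{Taka} (``Sasakian $\phi$-symmetric spaces''), so there is no argument in the paper to compare your proposal against. Takahashi's original approach is quite different from yours: he works directly with the local $\phi$-geodesic symmetries (reflections along Reeb orbits) and shows that, under completeness and simple-connectivity, these generate a transitive group of Sasaki automorphisms. Homogeneity is obtained first, and the $S^1$-bundle description then follows from the Boothby--Wang picture for homogeneous Sasaki manifolds. In particular he never passes through the leaf space or invokes curvature sign to force quasi-regularity.

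Your route, by contrast, has two genuine gaps. First, from ``transverse Ricci parallel and nonzero scalar curvature'' you slide to ``positive transverse Ricci'', but a transversely locally symmetric metric can have factors of noncompact type (negative Ricci) or mixed type; nothing you wrote excludes these, and compactness of $M$ alone does not (there are compact Sasaki manifolds with $c_1^B<0$). Second, even granting positive transverse Ricci, you cannot invoke Lemma~\ref{lem-4-2} or Theorem~\ref{thm-4-1} to obtain quasi-regularity: both of those results are stated under the standing hypothesis that $(M,\xi,\eta,g)$ is \emph{transverse reducible} with $r\ge 2$ factors, and Lemma~\ref{lem-4-2} further needs two distinct factors of positive Ricci. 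In the irreducible case --- which is exactly the case that matters when $b_2^B=1$ in Theorem~\ref{T-special} --- neither result applies, so your argument does not produce quasi-regularity (let alone regularity) there. This is not a circularity issue in the logical structure of the paper, but it does mean the paper's Section~2 machinery is not enough to recover Takahashi's theorem; you would need an independent argument (essentially Takahashi's) precisely in the irreducible case.
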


\section{Proof of Theorem \ref{T-general}}

We shall first prove the special case Theorem \ref{T-special}.  And  Theorem \ref{T-general} follows easily from Theorem \ref{thm-4-1} and this special case.

First  we assume $(M, \xi, \eta, g)$  is a compact simply-connected Sasaki manifold with nonnegative transverse bisectional curvature and $b^B_2=1$. So there is a $\lambda\geq0$ such that 
\[
c_1^B=\l [\omega^T]_B.
\]
When $\l=0$, then 
\[
\rho^T+\i \p\bar \p f=0
\]
for some basic function $f$. Since $Ric^T\geq 0$, $f$ must be  a constant, hence $\rho^T=0$. It follows further that $(M, \xi, \eta, g)$ is transverse flat. The only simply-connected transverse flat Sasaki manifold is $\R^{2k+1}$ with its standard Sasaki metric(c.f. \cite{Tanno}), which contradicts the compactness assumption.
Hence by a scaling we can assume $\l=2n+2$ and we consider the unnormalized Sasaki-Ricci flow
\begin{equation}\label{E-ricciflow}
\frac{\p g^T}{\p t}=-Ric^T
\end{equation}
with initial metric $g$. 
The equation \eqref{E-ricciflow} has short time existence by \cite{SWZ}. 
By maximum principle as in \cite{Mok} $g(t)$ has nonnegative transverse bisectional curvature for $t>0$.
Since $c_1^B>0$ implies the transverse Ricci curvature of $g$ is positive at least at one point,  it then follows that  $g(t)$ has positive transverse Ricci curvature and positive transverse holomorphic sectional curvature for $t\in (0, \delta]$, where $\delta>0$ is a small number. 

Now we suppose  the transverse K\"ahler metric $g^T$ of $(M, \xi, \eta, g)$ is not locally transverse symmetric. By continuity and by making $\delta$ smaller we can assume $g^T(t)$ is not locally transverse symmetric for $t\in (0, \delta)$. In the following we want to prove

\begin{prop}\label{P-positive}Under the above assumptions,  if $g(t)$ is not locally transverse symmetric, then $g(t)$ has positive transverse bisectional curvature for $t\in (0, \delta)$.
\end{prop}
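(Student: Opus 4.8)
The plan is to transplant H.-L. Gu's reworking \cite{Gu} of Mok's argument to the transverse K\"ahler setting and to run it along the Sasaki--Ricci flow \eqref{E-ricciflow}. What makes this possible is that every object involved is basic: over each foliation chart $U_\alpha$ the transverse metric descends to a genuine K\"ahler metric $g^T_\alpha(t)$ on $V_\alpha$, the flow \eqref{E-ricciflow} descends to the unnormalized K\"ahler--Ricci flow $\p_t g^T_\alpha=-Ric^T_\alpha$ on $V_\alpha$, and the transverse curvature $Rm^T$ satisfies the reaction--diffusion equation $\p_t Rm^T=\Delta_B Rm^T+Q(Rm^T)$, with $\Delta_B$ the basic Laplacian and $Q$ Hamilton's quadratic term --- exactly the evolution of the curvature along K\"ahler--Ricci flow on each $V_\alpha$. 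Since the transition maps \eqref{E-cocycle} are isometries of K\"ahler manifolds, any conclusion that is pointwise-algebraic on the Hermitian fibre $(\cD_p,\Phi,g^T)$ or $\nabla^T$-parallel automatically makes global sense on $M$.

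First I would record that nonnegative transverse bisectional curvature means precisely that $Rm^T$ lies at each point in the closed convex cone $\mathcal C$ of algebraic K\"ahler curvature tensors with nonnegative bisectional curvature, and that $\mathcal C$ is invariant under the reaction ODE $\dot M=Q(M)$ by the pointwise computation of Bando \cite{Bando} and Mok \cite{Mok} (pure algebra on a Hermitian vector space, carrying over verbatim); this is the maximum-principle assertion already invoked before the proposition. I would then apply Hamilton's strong maximum principle in the sharpened form of Brendle--Schoen \cite{BS2} that Gu uses: either $g(t)$ has positive transverse bisectional curvature for all $t\in(0,\delta)$ --- in which case we are done --- or there is a subinterval $(t_1,t_2)\subset(0,\delta)$ on which the null locus of the transverse bisectional form (regarded, in whichever of the equivalent formulations one prefers, as a subbundle $W$ of transverse $(1,1)$-forms) is $\nabla^T$-parallel and, at each point, invariant under $Q$.

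In the second case I would run Gu's analysis of the borderline situation. The $\nabla^T$-parallelism of $W$ reduces the transverse holonomy, and combined with the $Q$-invariance of $W$ and the strict positivity $Ric^T(t)>0$ on $(0,\delta)$ it forces, on each local leaf space $V_\alpha$, that $g^T_\alpha(t)$ is a nontrivial local Riemannian product or satisfies $\nabla Rm=0$. But the hypothesis $b_2^B=1$ (which is unchanged along the flow, the Sasaki--Ricci flow being a transverse K\"ahler deformation) forces, via Lemma \ref{lem-2-5}, that $(M,\xi,\eta,g(t))$ is transverse irreducible, so the product alternative cannot occur --- a flat factor being in any case excluded by $Ric^T(t)>0$. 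Hence each $g^T_\alpha(t)$ is locally Hermitian symmetric, i.e. $g(t)$ is locally transverse symmetric, contradicting the standing hypothesis. Therefore the first alternative holds.

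The main obstacle is the middle step: confirming that the Brendle--Schoen strong maximum principle and Gu's ensuing holonomy-and-algebra argument, written for honest K\"ahler manifolds, transplant to the transverse K\"ahler structure evolving under \eqref{E-ricciflow}. In our favour every step is either pointwise-algebraic on the Hermitian fibre $(\cD_p,\Phi,g^T)$ or local on the leaf spaces $V_\alpha$, where \eqref{E-ricciflow} is literally K\"ahler--Ricci flow, so no genuinely new analytic phenomenon intervenes; what requires care is checking that the parallel, $Q$-invariant subbundle produced chart by chart is compatible under the cocycle \eqref{E-cocycle} --- hence a global basic object on $M$ --- and that ``$g^T_\alpha(t)$ locally Hermitian symmetric for every $\alpha$'' is exactly what transverse symmetry means. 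This is careful bookkeeping rather than a new idea, but it is the substance of the proof.
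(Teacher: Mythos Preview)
Your approach is the same as the paper's --- both transplant Gu's Brendle--Schoen argument to the Sasaki--Ricci flow --- but you have underestimated one step that the paper isolates as a separate proposition. When you write that transverse irreducibility (via $b_2^B=1$ and Lemma \ref{lem-2-5}) rules out the product alternative on each local leaf space $V_\alpha$, you are conflating global and local holonomy. Lemma \ref{lem-2-5} tells you the \emph{global} transverse holonomy acts irreducibly on $\cD$; Berger's theorem, however, is a statement about the holonomy of an honest Riemannian manifold, and the only Riemannian manifolds in sight are the local leaf spaces $(V_\alpha, g^T_\alpha)$. Their holonomy is the \emph{local} transverse holonomy, which a priori could be strictly smaller --- the paper itself flags this distinction just after Definition 2.2 (``reducibility implies local reducibility, but the converse is in general not true'').

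The paper closes this gap as follows: along the Sasaki--Ricci flow the transverse metric $g^T(t)$ is real-analytic in normal coordinates (Bando's analyticity \cite{Bando1}, adapted to the Sasaki setting), and for an analytic connection the local and restricted global holonomy coincide (\cite{KN}, Chapter II, Section 10). Only then does global irreducibility descend to each $(V_\alpha, g^T_\alpha)$, so that Berger's list applies there; combined with $Ric^T>0$ and the standing hypothesis that $g^T$ is not locally symmetric, this forces $\textsf{Hol}(g^T_\alpha)=U(n)$. The contradiction is then obtained exactly as in Gu: the null set is invariant under the full $U(n)$, and a rotation in the $(e_1,e_2)$-plane together with the null-vector identities forces $R^\nu_{1\bar 1 1\bar 1}+R^\nu_{2\bar 2 2\bar 2}=0$, contradicting the strict positivity of transverse holomorphic sectional curvature on $(0,\delta)$. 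The analyticity step is not the cocycle bookkeeping you flagged --- it is the one genuinely new analytic input beyond what already lives chartwise on the $V_\alpha$, and your sketch should name it explicitly.
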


Note that along the Sasaki-Ricci flow, the contact 1-form $\eta$, hence the contact subbundle $\cD$ varies along the flow, while the quotient bundle $\nu=\nu(\cF_\xi)$ remains unchanged. These two bundles are identified by $f$, as in Section 2. Now we use a technique as in Hamilton \cite{Hamilton88}. We  fix an Hermitian metric $h$ on $\nu^{1,0}$. Consider the bundle isomorphisms  $f_t: \nu^{1, 0}\rightarrow \cD^{1, 0}$ such that
\[
h=f_t^{*} g^T(t).
\]
Given a local trivialization of $\nu$ and $\cD$, the isomorphism $f_t$ satisfies the evolution equation
\[
\frac{\p f_\alpha^i}{\p t}=\frac{1}{2}g^{i\bar j}_T R^T_{k\bar j} f^k_{\alpha}.
\]
By pulling back the transverse connection $\nabla^T_t$ of $g^T(t)$ through $f_t$ ($t\in (0, \delta)$), we can then define the covariant derivatives $D_t$ for sections of $\nu$ and any tensor bundle generated by $\nu$. We can also pull back  the curvature tensor $R^T_{i\bar j k\bar l}$ to a tensor on $\nu\otimes \C$ by \[R^\nu_{\alpha\bar \beta \gamma\bar \delta}=R^T_{i\bar jk\bar l}f^i_\alpha \bar{f^j_\beta} f^k_\gamma \bar{f^l_\delta}.\] Note that in local coordinates, the transverse K\"ahler metric $\{g^T_\alpha, V_\alpha\}$ satisfies the K\"ahler-Ricci flow
\begin{equation}\label{E-3-1}
\frac{\p g^T_\alpha}{\p t}=-Ric^T_\alpha.
\end{equation}
It  then follows from the standard calculation that the curvature tensor $R^\nu_{\alpha\bar \beta\gamma \bar \delta}$ satisfies, on the orthonormal frame $\{e_\alpha\}$ of $\nu^{1, 0}$,
\begin{equation}\label{E-3-2}
\frac{\p R^\nu_{\alpha\bar \alpha \beta\bar \beta }}{\p t}=\Delta_t R^\nu_{\alpha\bar \alpha \beta\bar \beta }+\sum_{\zeta, \mu}\left({R^\nu_{\alpha\bar \alpha \mu \bar \zeta}} R^\nu_{\zeta \bar \mu\beta\bar \beta}-|R^\nu_{\alpha\bar \zeta \beta \bar \mu}|^2+|R^\nu_{\alpha\bar \beta \mu \bar \zeta}|^2\right),
\end{equation}
where $\Delta_t$ is the Laplacian induced by the connection $D_t$ on sections of $((\nu^{1, 0})^{*})^{\otimes 4}$.

As in \cite{Gu} Section 2, one obtains that for any point $p\in M$ and unit vectors $e_\alpha, e_\beta\in \nu^{1, 0}_p$, it holds
\begin{equation}\label{E-gu}
\sum_{\mu, \zeta}\left( R^\nu_{\alpha\bar\alpha \mu\bar \zeta} R^\nu_{\zeta\bar \mu \beta\bar\beta}-|R^\nu_{\alpha\bar \mu \beta \bar \zeta}|^2\right)\geq  \min\{0, A\},\end{equation}
where for any $\zeta, \mu\in \nu^{1, 0}_p$ for $p\in M$, there is a universal positive constant $c$ such that
\[
A\geq c\cdot \inf _{|X|=|Y|=1}\frac{d^2}{ds^2}|_{s=0}R^\nu(e_\alpha+s X, \overline{e_\alpha+s X}, e_\beta+s Y, \overline{e_\beta+sY}).
\]

Now we use  the language as \cite{BS2}. Define $P$ to be the orthonormal  frame bundle of $\nu^{1, 0}$ with structure group $U(n)$, which acts on P on the right. For each $t$, the connection $D_t$ on sections of $\nu$ induces a connection on $P$. For each point $\underline e=(p,\{e_1, e_2, \cdots, e_n\})\in P$, let $\mathbb{H}_{\underline e}$ be the horizontal distribution and $\mathbb{V}_{\underline e}$ be the vertical subspace of $T_{\underline e} P$; hence $T_{\underline e}P=\mathbb{H}_{\underline e}\oplus \mathbb{V}_{\underline e}$. Next we define a collection of smooth horizontal vector fields $\tilde X_1$, $\cdots$, $\tilde X_n$, $\tilde Y_1$, $\cdots$, $\tilde Y_n$ and $\tilde \xi$ on $P$.  Note that $\{f (e_1), \cdots, f (e_n)\}$ span $\cD^{1, 0}$.
Denote \[X_i=\frac{1}{2} (f(e_i)+f(\bar e_i)), \; \mbox{and}\; Y_i=\Phi X_i=\frac{-\i}{2}(f(e_i-\bar e_i)), i=1, \cdots,  n.\]
It then follows that $TM$ is spanned by $\{\xi, X_1, \cdots, X_n, Y_1, \cdots, Y_n\}$. We then define $\tilde \xi$ to be the horizontal lift of $\xi$, and $\tilde X_i$, $\tilde Y_i$ to be the horizontal lift of $X_i$, $Y_i$ respectively.\\

We define a nonnegative function $u: P\times (0, \delta)\rightarrow R$ by
\[
u: (\underline e=\{e_1, \cdots, e_n\}, t)\rightarrow R^{\nu}_{g(t)}(e_1, \bar e_1, e_2, \bar e_2).
\]
By \eqref{E-3-2} and \eqref{E-gu}, we then get
\begin{equation}
\frac{\p u}{\p t} \geq Lu-c\min\{0, \inf_{|\zeta|=1, \zeta\in \mathbb{V}_{\underline e}} D^2u(\underline e, t)(\zeta, \zeta)\},
\end{equation}
where \[Lu=\sum_i\tilde X_i (\tilde X_i u)+\sum_i \tilde Y_i (\tilde Y_i u)+\tilde \xi (\tilde \xi u)\] denotes the horizontal Laplacian on $P$ and we have used the fact that all metrics $g(t)$ are $\xi$-invariant.

Denote  $F=\{(\underline e, t): u(\underline e, t)=0\}\subset P\times (0, \delta)$.
Invoking Proposition 5 in  \cite{BS2},  we have
\begin{prop}\label{P-parallel}
For any fixed $t\in (0, \delta)$, if $\tilde \gamma: [0, 1]\rightarrow P$ is a smooth horizontal curve satisfying $(\tilde \gamma(0), t)\in F$, then $(\tilde \gamma(1), t)\in F$.
\end{prop}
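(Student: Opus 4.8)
The plan is to invoke the abstract maximum principle / strong parabolic comparison argument of Brendle--Schoen (Proposition 5 in \cite{BS2}), after checking that our set-up fits its hypotheses exactly. The key point is that we have already engineered, in the paragraphs leading up to the statement, a nonnegative function $u$ on the total space $P \times (0,\delta)$ of the orthonormal frame bundle, together with the differential inequality
\[
\frac{\p u}{\p t} \geq Lu - c\,\min\Bigl\{0,\ \inf_{|\zeta|=1,\ \zeta\in\mathbb{V}_{\underline e}} D^2 u(\underline e,t)(\zeta,\zeta)\Bigr\},
\]
where $L$ is the horizontal Laplacian built from the horizontal vector fields $\tilde X_i, \tilde Y_i, \tilde\xi$. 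So the task is to show that for fixed $t$ the zero set $F_t = \{\underline e : u(\underline e,t)=0\}$ is invariant under horizontal transport. First I would observe that since $u \geq 0$, any point of $F_t$ is a spatial minimum of $u(\cdot,t)$; hence along a horizontal curve $\tilde\gamma$ through such a point the first horizontal derivative of $u$ vanishes and the second horizontal derivative is nonnegative, and likewise the restriction of $D^2 u$ to the vertical subspace is nonnegative there, so the $\min\{0,\cdot\}$ term contributes nothing. This is precisely the structural input required by \cite[Proposition 5]{BS2}: a nonnegative $u$ satisfying a parabolic inequality of the form $\partial_t u \geq Lu$ plus a term that is controlled by (and in fact dominated by a constant times) the Hessian of $u$ in the fiber directions.

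The second step is to verify the regularity and geometric hypotheses needed to apply the Brendle--Schoen proposition in the present (transverse, odd-dimensional) setting rather than the Riemannian one of \cite{BS2}. Here the essential remark is that because every metric $g(t)$ along the Sasaki--Ricci flow is $\xi$-invariant, the function $u$ and the operator $L$ descend correctly: the extra horizontal direction $\tilde\xi$ behaves just like an ordinary coordinate direction, $\tilde\xi(\tilde\xi u)$ enters $L$ on equal footing with the $\tilde X_i, \tilde Y_i$ terms, and no new lower-order terms are produced. Thus $P \times (0,\delta)$ together with the horizontal distribution $\mathbb{H}$ and the operator $L$ satisfies the same formal hypotheses as the frame bundle picture in \cite{BS2}, with $\dim_{\R}\mathbb{H} = 2n+1$; the argument of \cite{BS2} is purely about the structure of the inequality and the bundle, not about the parity of the dimension, so it goes through verbatim.

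Finally I would carry out the conclusion: applying \cite[Proposition 5]{BS2} to $u$ yields that for each fixed $t\in(0,\delta)$ the fiberwise zero set $F_t$ is preserved under horizontal parallel transport, which is exactly the assertion that if $(\tilde\gamma(0),t)\in F$ then $(\tilde\gamma(1),t)\in F$ for every smooth horizontal curve $\tilde\gamma$. I expect the main obstacle to be a bookkeeping one rather than a conceptual one: one must make sure that the horizontal distribution on $P$ induced by the pulled-back transverse connection $D_t$ is genuinely well-defined and smooth on $(0,\delta)$ (it is, because the bundle maps $f_t$ are smooth and the flow exists there by \cite{SWZ}), and that the inequality above holds in the barrier/viscosity sense at points where $u$ fails to be twice differentiable, so that the strong maximum principle of \cite{BS2} is legitimately applicable. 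Granting the differential inequality already established and the $\xi$-invariance, no further work beyond quoting \cite[Proposition 5]{BS2} is needed.
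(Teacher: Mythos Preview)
Your proposal is correct and follows exactly the paper's approach: the paper does not give a standalone proof but simply writes ``Invoking Proposition 5 in \cite{BS2}, we have'' before stating the proposition, so the entire content is the citation together with the preceding differential inequality and the $\xi$-invariance remark. Your write-up is just a more detailed unpacking of why the Brendle--Schoen hypotheses are met in the Sasaki frame-bundle setting, which is precisely what the paper leaves implicit.
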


Suppose for some $t\in(0, \delta)$ that $g(t)$ does not have positive transverse bisectional curvature. We can then assume that for some $(\underline e, t) \in P\times (0, \delta)$,  $R^\nu(e_1, \bar e_1, e_2, \bar e_2)=0$.

Next we need the following holonomy theorem,
\begin{prop}\label{P-holonomy}For any fixed $t\in (0, \delta)$, we have
 \[\textsf{Hol}(P)=\textsf{Hol}(\nu, D_t)=\textsf{Hol}(\cD_t, \nabla^T_t)=\textsf{Hol}(g^T_{\alpha,t}, V_\alpha)=U(n),\]
where $\textsf{Hol}(g^T_{\alpha,t}, V_\alpha)$ denotes the holonomy of transverse K\"ahler metric $g^T_{\alpha,t}$ in any (simply connected) local coordinate $V_\alpha$.
\end{prop}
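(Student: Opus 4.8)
The plan is to compute the global transverse holonomy $\textsf{Hol}(\cD_t,\nabla^T_t)$ and show all four groups in the statement coincide with it and equal $U(n)$. The bundle isomorphism $f_t:\nu^{1,0}\to\cD^{1,0}_t$ is parallel by the very definition of $D_t$ as the $f_t$-pullback of $\nabla^T_t$, so $\textsf{Hol}(\nu,D_t)=\textsf{Hol}(\cD_t,\nabla^T_t)$; and $P$, being the unitary frame bundle of $(\nu^{1,0},h)$ equipped with the connection induced by $D_t$, has holonomy exactly $\textsf{Hol}(\nu,D_t)$. In a foliated chart $U_\alpha$ with submersion $\pi_\alpha:U_\alpha\to V_\alpha$ and $V_\alpha$ simply connected, $(\cD,\nabla^T_t)$ is the $\pi_\alpha$-pullback of $(TV_\alpha,\nabla^{g^T_{\alpha,t}})$, so $\textsf{Hol}(g^T_{\alpha,t},V_\alpha)\subseteq\textsf{Hol}(\cD_t,\nabla^T_t)\subseteq U(n)$. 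Hence it suffices to prove that the local transverse holonomy $\textsf{Hol}(g^T_{\alpha,t},V_\alpha)$ already equals $U(n)$; then every group in the chain, being squeezed between $U(n)$ and $U(n)$, equals $U(n)$.

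The first point is that the holonomy acts irreducibly on $\C^n$. Since $M$ is simply connected the restricted holonomy equals the full holonomy, so any holonomy-invariant proper subspace of $\C^n$ would yield a global $\nabla^T_t$-parallel orthogonal splitting $\cD=\cD_1\oplus\cD_2$; by Lemma \ref{lem-2-5} this forces $b^{1,1}_B\geq 2$. But positive transverse Ricci curvature of $g(t)$ kills $H^{2,0}_B$ (transverse Bochner--Kodaira vanishing), so $b^{1,1}_B=b^B_2$, and $b^B_2$ is a transverse Kähler deformation invariant, hence is still $1$ along the Sasaki--Ricci flow — a contradiction. This shows $\textsf{Hol}(\cD_t,\nabla^T_t)$ is irreducible; the same conclusion for the local group $\textsf{Hol}(g^T_{\alpha,t},V_\alpha)$ follows because its reducibility would make $g^T_{\alpha,t}$ a local Kähler product, hence make $M$ locally transverse reducible, and then — using the local-to-global principle for reducibility in the transverse setting, together with compactness and simple-connectedness of $M$ — transverse reducible, again contradicting $b^B_2=1$.

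It then remains to apply Berger's classification to the irreducible Kähler holonomy group $\textsf{Hol}(g^T_{\alpha,t},V_\alpha)\subseteq U(n)$ (legitimate, as $\nabla^{g^T_{\alpha,t}}$ is genuinely a Kähler Levi-Civita connection on $V_\alpha$): the group is $U(n)$, $SU(n)$, $Sp(n/2)$, or the holonomy of an irreducible Hermitian symmetric space. The $SU(n)$ and $Sp(n/2)$ cases force Ricci-flatness, contradicting the positivity of the transverse Ricci curvature of $g(t)$; and by Simons' theorem the Hermitian symmetric holonomy would force $\nabla^{g^T_{\alpha,t}}R^{g^T_{\alpha,t}}=0$, i.e. $g(t)$ locally transverse symmetric, contrary to the standing hypothesis on $(0,\delta)$. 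Therefore $\textsf{Hol}(g^T_{\alpha,t},V_\alpha)=U(n)$, and the chain of inclusions gives $\textsf{Hol}(P)=\textsf{Hol}(\nu,D_t)=\textsf{Hol}(\cD_t,\nabla^T_t)=\textsf{Hol}(g^T_{\alpha,t},V_\alpha)=U(n)$.

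I expect the delicate step to be the local-to-global passage — identifying the holonomy of the local transverse Kähler metric on $V_\alpha$ with that of the transverse connection over all of $M$, equivalently ruling out that $g^T_{\alpha,t}$ is a local Kähler product while $M$ stays transversely irreducible. This is precisely the local-versus-global holonomy subtlety for Riemannian foliations flagged in Section \ref{S2}, and it is exactly where compactness and the simple-connectedness of $M$ are used; everything else is a routine application of Lemma \ref{lem-2-5}, the transverse vanishing theorem, and the Berger--Simons holonomy classification.
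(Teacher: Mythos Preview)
Your overall architecture is right --- identify local and global transverse holonomy, show irreducibility, then apply Berger --- and you have correctly located the pressure point: the passage from local to global holonomy (equivalently, from local transverse reducibility to global transverse reducibility). But you have not actually supplied an argument for that step; invoking ``the local-to-global principle for reducibility in the transverse setting, together with compactness and simple-connectedness of $M$'' is a placeholder, not a proof. For smooth connections it is simply false in general that the restricted holonomy on a small ball coincides with the global holonomy, even on simply connected manifolds; correspondingly, a metric can be locally a product on each chart without any global parallel splitting of $\cD$. So as written, your step ``locally transverse reducible $\Rightarrow$ transverse reducible'' is a genuine gap.

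The paper closes this gap with a specific mechanism you did not mention: real analyticity. Since $g(t)$ solves the (Sasaki--)Ricci flow, Bando's theorem gives that $g(t)$, $g^T(t)$ and hence $\nabla^T_t$ are real analytic for every $t\in(0,\delta)$. For analytic connections the local holonomy group equals the full (restricted) holonomy group (Kobayashi--Nomizu, Ch.~II, \S10), yielding directly
\[
\textsf{Hol}(\cD_t,\nabla^T_t)=\textsf{Hol}(\cD_{U_\alpha},\nabla^T_t)=\textsf{Hol}(g^T_{\alpha,t},V_\alpha).
\]
With that in hand, the remainder of your argument (irreducibility from $b_2^B=1$ via Lemma~\ref{lem-2-5}, exclusion of the locally symmetric case by hypothesis, and then Berger's list ruling out $SU(n)$ and $Sp(n/2)$ by positivity of transverse Ricci) matches the paper's and is correct. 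The fix, then, is to replace your hand-waved local-to-global principle by the analyticity argument; nothing else in your proposal needs to change.
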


\begin{proof}
First observe that $g(t)$ and $g^T(t)$ are analytic in normal coordinate for $t\in (0, \delta)$. Bando \cite{Bando1} proved the analyticity of the metric $g(t)$ along the Ricci flow. His proof can be modified directly to Sasaki setting. It then follows that $\nabla^T$ is analytic.
Now let $U_\alpha\subset M$ be an open set which is diffeomorphic to $\R^{2n+1}$ and $\pi_\alpha: U_\alpha\rightarrow V_\alpha$ be the submersion correspond to Reeb foliation, as introduced in Section 2. We can restrict our consideration on $U_\alpha$ to talk about local holonomy, then by definition we have
\[\textsf{Hol}(P_{U_\alpha})=\textsf{Hol}(\nu_{U_\alpha}, D_t)=\textsf{Hol}(\cD_{U_\alpha}, \nabla^T).
\]
Since $\nabla^T$ is induced by $g^T$ and $\nabla^T_{\xi} \cD=0$, we have
\[
\textsf{Hol}(\cD_{U_\alpha}, \nabla^T)=\textsf{Hol}(g^T_\alpha, V_\alpha).
\]
By the analyticity of the connection $\nabla^T$, it then follows that the local honolomy coincides with the global holonomy (\cite{KN}, Chapter II, Section 10).
In particular,
\begin{equation}\label{E-3-7}\textsf{Hol}(\cD, \nabla^T)=\textsf{Hol}(\cD_{U_\alpha}, \nabla^T)=\textsf{Hol}(g^T_\alpha, V_\alpha).
\end{equation}

\end{proof}

Lemma \ref{lem-2-5} then implies $\cD$ is transverse irreducible. It then follows \eqref{E-3-7} that $g^T_\alpha$ is irreducible. Note that we also assume that $g^T$ is not locally symmetric, hence $g^T_\alpha$ is not locally symmetric.
Now by Berger's theorem, we know that
$\textsf{Hol}(g^T_\alpha, V_\alpha)=U(n), SU(n), \; \mbox{or}\; Sp(n/2).$ Clearly the later two cases imply that $g^T$ is transverse Ricci flat and can not occur.

By Proposition \ref{P-parallel} and \ref{P-holonomy}, we know that $\{\tilde {\underline e}, t\}\in F$ if $\tilde{\underline e}=\underline e A$ for $A\in U(n)$. Let
\[
\begin{pmatrix}
\tilde {e_1}&
\tilde e_2
\end{pmatrix}=\begin{pmatrix}
e_1 &
e_2
\end{pmatrix} \cdot \begin{pmatrix}
 \cos \theta & -\sin\theta\\
\sin\theta & \cos\theta
\end{pmatrix}
\]

Then we get $R^{\nu}(\tilde e_1, \overline {\tilde e_1}, \tilde e_2, \overline {\tilde e_2})=0$.
However, by \eqref{E-3-2} and the first variation of $R^{\nu}_{\alpha\bar \alpha \beta \bar\beta}$ at $\nu_p^{1, 0}$ (see \cite{Gu}), if $R^\nu_{1\bar 1 2\bar 2}=0$, we have
\begin{equation}\label{E-null}
\left\{
\begin{array}{lll}
\sum_{\mu, \zeta}\left(R^\nu_{1\bar 1 \mu\bar \zeta} R^\nu_{\zeta\bar \mu 2\bar 2}-|R^\nu_{1\bar \mu 2 \bar \zeta}|^2\right)=0\\
R^\nu_{1\bar 2 \mu\bar \zeta}=0, \;\forall \mu, \zeta.\\
R^{\nu}_{1\bar 1 2\bar \mu}=R^\nu_{2\bar2 1\bar \mu}=0, \;\forall \mu.
\end{array}\right.
\end{equation}
On the other hand, a direct computation (using \eqref{E-null}) implies
\[
R^\nu(\tilde e_1, \overline{\tilde e_1}, \tilde e_2, \overline{\tilde e_2})=\cos ^2\theta\sin^2\theta (R^\nu_{1\bar 1 1\bar 1}+R^\nu_{2\bar 2 2\bar 2}).
\]
This contradicts with the positivity of  transverse holomorphic sectional curvature of $g(t)$ for any $t\in (0, \delta)$.
It completes the proof of Proposition \ref{P-positive}. \\

Theorem \ref{T-special} is now a direct consequence. If $(M, g)$ is not locally transverse symmetric, by Proposition \ref{P-positive} and our previous results \cite{He-Sun}, $(M, g)$ is then a simple metric on a weighted Sasaki-sphere.  If $(M, g)$ is locally transverse symmetric, by Takahashi's result (Theorem \ref{T-symmetric}), then $M$ is homogenous and hence it is a $S^1$ bundle over an Hermitian symmetric space. \\

Now we finish the proof of Theorem \ref{T-general}. Suppose $(M, \xi, \eta, g)$ is a compact Sasaki manifold with nonnegative transverse bisectional curvature, with a maximum splitting  $\cD=\cD_1\oplus \cdots \oplus \cD_r$ such that $\omega^T=\omega_1\oplus \cdots \oplus \omega_r$. If $r=1$, then by Lemma \ref{lem-2-5} $b^{1, 1}_B=1$. Hence  $c_1^B=\l [\o]$ for $\l=0$ or $\l>0$. When $\l=0$, by nonnegative curvature assumption it follows that the transverse bisectional curvature has to be flat. The universal cover of $M$ is then isometric to $E^{2n+1}$ with standard Sasaki structure \cite{Tanno}. 
When $\l>0$,  then $M$ is compact and $\pi_1(M)$ is finite. Passing to universal covering it is  reduced to Theorem \ref{T-special}.  So we only need to deal with the case $r\geq 2$.  By Lemma \ref{L-maximalsplitting}, we may assume that
 \[
 c_1^B=\sum_{i=1}^r c_i [\omega_i]_B.
 \]
 By Lemma \ref{Calabi-Yau}, after a transverse K\"ahler deformation we obtain a Sasaki structure $(\xi, \tilde{\eta}, \tilde{g})$ which satisfies the assumption of Theorem \ref{thm-4-1}. It follows that $(M, \xi, \tilde \eta, \tilde g)$ is quasi-regular; hence $(M, \xi, \eta, g)$ is also quasi-regular. 
 
We assume the splitting of $\cD$ given by $\cD=\cD_0\oplus \cD_1$ and  $\cD_0$ is the maximal subbundle such that $g$ is transverse flat on $\cD_0$.  The foliations generated by $\cD_1\oplus \langle \xi\rangle$ are all isometric to a (quasi-regular) compact Sasaki manifold  with $c_1^B$ positive and nonnegative transverse bisectional curvature $(S, h)$; hence its universal covering $(\tilde S, \tilde h)$ is compact. Passing to the universal covering and following the proof in Theorem \ref{thm-4-1}, then $\widetilde M$ is isometric to  $(E^{2l+1}, g_0)\times_{\R^1} (\tilde S, \tilde h)$, a join construction. If $(\tilde S, \tilde h)$ is reducible, then it follows that similarly $(\tilde S, \tilde h)=(S_1, g_1)\times_{S^1}(S_2, g_2)$, where $(S_i, g_i)$ is a quasi-regular,  compact simply-connected Sasaki manifold with nonnegative transverse bisectional curvature. We can then keep doing this until each piece $S_i$ such that $b^{1, 1}_B(S_i)=1$. 
By Theorem \ref{T-special}, $(S_i, g_i)$ is either a (quasi-regular) weighted Sasaki sphere, or a regular compact Sasaki manifold which corresponds to a compact Hermitian symmetric space with rank $\geq 2$. In particular, we have

\begin{thm}\label{thm-4-2} Let $(M, \xi, \eta, g)$ be a quasi-regular compact Sasaki manifold with non-negative transverse bisectional curvature. Then  its universal cover $\widetilde{M}$ is isomorphic to a join of $(S_1, g_1)$, $\cdots$, $(S_k, g_k)$ with $(E^{2l+1}, g_0)$ for some $k, l\geq 0$, where $(S_i, g_i)$ is a compact simply-connected quasi-regular Sasaki manifold with nonnegative transverse bisectional curvature and with $b^2_B=1$, $(E^{2l+1}, g_0)$ is the standard Sasaki metric on $\R^{2l+1}$. \end{thm}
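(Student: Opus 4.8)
The plan is to carry out the transverse de Rham decomposition directly on $M$, invoking quasi-regularity only when it lets me pass freely between $M$ and its (compact) quotient orbifold. First I would dispose of the case $b^{1,1}_B(M)=1$. By Lemma \ref{lem-2-5} the contact distribution $\cD$ is then transverse irreducible and $c_1^B=\l[\omega^T]_B$ with $\l\ge 0$. If $\l=0$ the trace identity $\rho^T+\i\p\bar\p f=0$ together with $Ric^T\ge 0$ forces $f$ to be constant, hence $\rho^T=0$ and, by the curvature assumption, $g$ is transverse flat, so $\widetilde M\cong(E^{2n+1},g_0)$ by Tanno \cite{Tanno}; this is the degenerate case $k=0$, $l=n$. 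If $\l>0$ then $M$ is compact with finite $\pi_1$ and $\widetilde M$ is itself a compact simply-connected quasi-regular Sasaki manifold of nonnegative transverse bisectional curvature with $b^2_B=1$, which is the case $k=1$, $l=0$. Hence from now on $b^{1,1}_B>1$, and I take the maximal splitting $\cD=\cD_1\oplus\cdots\oplus\cD_r$, $\omega^T=\oplus_i\omega_i$, $r\ge 2$, with $c_1^B=\sum_i c_i[\omega_i]_B$ and $c_i\ge 0$ furnished by Lemma \ref{L-maximalsplitting}.

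The second step separates the flat part. Reorder so that $c_i>0$ for $i\le m$ and $c_i=0$ for $i>m$. For $i>m$ the well-defined closed basic $(1,1)$-form $\rho_i$ is basic-exact, so $\rho_i=\i\p\bar\p f_i$ for some basic $f_i$; since nonnegativity of the transverse bisectional curvature forces $\rho_i\ge 0$, the maximum principle on the compact $M$ makes $f_i$ constant, whence $\rho_i=0$ and $\cD_i$ is transversely flat (again by the curvature hypothesis). Consequently $\cD_0:=\cD_{m+1}\oplus\cdots\oplus\cD_r$ is exactly the maximal transversely flat subbundle, while each irreducible summand of $\cD_+:=\cD_1\oplus\cdots\oplus\cD_m$ has positive transverse Ricci curvature (after a $D$-homothetic transformation as in \cite{BGN} one may take it honestly positive). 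At this point $(M,\xi,\eta,g)$ satisfies the hypotheses of Theorem \ref{thm-4-1}.

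Now I would run the argument in the proof of Theorem \ref{thm-4-1} essentially verbatim. By Lemma \ref{lem-4-1} each of $\cD_+\oplus\langle\xi\rangle$ and $\cD_0\oplus\langle\xi\rangle$ integrates to a totally geodesic foliation; the leaves of the first are complete with positive transverse Ricci curvature, hence compact by Myers' theorem, while the leaves of the second are transversely flat, hence covered by $E^{2l+1}$ with its standard Sasaki structure by Tanno. Fixing a leaf $S$ of the $\cD_+$-foliation with compact universal cover $\widetilde S$, the exponential-map construction of Theorem \ref{thm-4-1} yields an $\R$-equivariant local Sasaki isomorphism $\widetilde S\times E^{2l+1}\to M$; since $\widetilde S\times E^{2l+1}$ is simply connected and the diagonal $\R$-action is free and proper, the quotient $\widetilde S\times_\R E^{2l+1}$ is simply connected, so it is the universal cover $\widetilde M$, exhibited as a join of $\widetilde S$ with $(E^{2l+1},g_0)$. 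It remains to split $\widetilde S$. Since all its factors come from the $c_i>0$ part, $c_1^B(\widetilde S)>0$, and (its Reeb orbits being orbits in $M$) $\widetilde S$ is quasi-regular; passing to its quotient, a compact simply-connected Kähler orbifold of nonnegative bisectional curvature and $c_1>0$, the orbifold de Rham / Howard--Smyth--Wu-type decomposition splits it into a product of orbifolds with second Betti number one, which lifts to a join $\widetilde S\cong(S_1,g_1)\times_{S^1}\cdots\times_{S^1}(S_k,g_k)$; equivalently one iterates the transverse de Rham construction on $\widetilde S$ until each piece has $b^{1,1}_B=1$. Each $S_i$ is then a compact simply-connected quasi-regular Sasaki manifold with nonnegative transverse bisectional curvature, and since $c_1^B(S_i)>0$ forces $H^{2,0}_B(S_i)=0$ we get $b^2_B(S_i)=b^{1,1}_B(S_i)=1$. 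Combining with the first factor gives the asserted join decomposition of $\widetilde M$.

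The step I expect to be the main obstacle is the rigorous handling of the passage to the quotient orbifold: one needs the orbifold versions of the de Rham decomposition (and of basic/orbifold Hodge theory) to split $\widetilde S$ and to identify $b^2_B$ of a quasi-regular factor with the second Betti number of its base, and one must verify that the leafwise exponential-map construction of Theorem \ref{thm-4-1} assembles into a genuine global fibre product over $\R$ whose projection to $M$ is the universal covering. All the analytic ingredients --- transverse Hodge theory, the maximum principle, Myers' theorem, Tanno's rigidity of transversely flat structures, and the de Rham-type propagation argument --- are already in place from the preceding sections, so the remaining work is largely organizational; the only genuinely new bookkeeping is isolating the single flat join factor $(E^{2l+1},g_0)$ and certifying that every other irreducible piece has basic second Betti number one.
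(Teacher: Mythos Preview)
Your overall architecture matches the paper's: isolate the flat summand, split off $E^{2l+1}$ via the exponential-map construction of Theorem~\ref{thm-4-1}, and then iterate on the compact positive factor until each piece has $b^{1,1}_B=1$. Your treatment of the $c_i=0$ summands via the basic $\partial\bar\partial$-lemma and the maximum principle is correct and in fact slightly cleaner than what the paper writes.

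The genuine gap is in the sentence ``each irreducible summand of $\cD_+$ has positive transverse Ricci curvature (after a $D$-homothetic transformation as in \cite{BGN} one may take it honestly positive).'' From $c_i>0$ you only have $[\rho_i]_B=c_i[\omega_i]_B$ as a \emph{cohomological} condition; together with the curvature hypothesis this gives $\rho_i\ge 0$, but it does not give $\rho_i>0$ pointwise. A $D$-homothetic transformation merely rescales the transverse metric and leaves the transverse Ricci tensor unchanged; what \cite{BGN} actually does is pass from \emph{pointwise} positive transverse Ricci to positive Ricci of the full Sasaki metric, which is the opposite direction from what you need. Without pointwise positivity you cannot invoke Myers' theorem, so you have not established that the leaves of $\cD_+\oplus\langle\xi\rangle$ are compact, and the whole join construction stalls.

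The paper closes this gap with Lemma~\ref{Calabi-Yau}: the transverse Calabi--Yau theorem produces a new Sasaki metric $\tilde g$ with the \emph{same} foliations $\text{span}\{\xi,\cD_i\}$ and with transverse Ricci form exactly $\sum_i c_i\omega_i$, which \emph{is} pointwise positive on each $\cD_i$ with $c_i>0$. Myers then applies to the leaves for $\tilde g$, and since the leaves coincide for $g$ and $\tilde g$, compactness transfers back. You should either restore this Calabi--Yau step, or---if you prefer the orbifold route you allude to at the end---replace Myers by the orbifold Cheeger--Gromoll splitting on the compact quotient $M/\cF_\xi$; but as written the argument does not go through.
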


Theorem \ref{thm-4-2} is the Sasaki version of Theorem \ref{T-general} (2) when $M$ is assumed to be quasiregular. 
We should emphasize that  there is no canonical choice of the quotient of $S^1$ action in the above construction. This corresponds to the fact that there is no canonical  join construction for quasiregular Sasaki manifolds. For more details about join construction, see \cite{BG}, Section 7.6 for example.

Weiyong He\\
Department of Mathematics, University of Oregon, Eugene, Oregon, 97403\\
whe@uoregon.edu

Song Sun\\
Department of Mathematics, Imperial College, London, SW7 2AZ, U.K.\\
s.sun@imperial.ac.uk

\end{document}